\renewenvironment{description}
               {\list{}{\labelwidth\z@ \itemindent-\leftmargin
                        }}
               {\endlist}
\def\eqnarray{\stepcounter{equation}\let\@currentlabel=\theequation
\global\@eqnswtrue
\tabskip\@centering\let\\=\@eqncr
$$\halign to \displaywidth\bgroup\hfil\global\@eqcnt\z@
  $\displaystyle\tabskip\z@{##}$&\global\@eqcnt\@ne
  \hfil$\displaystyle{{}##{}}$\hfil
  &\global\@eqcnt\tw@ $\displaystyle{##}$\hfil
  \tabskip\@centering&\llap{##}\tabskip\z@\cr}
\def\endeqnarray{\@@eqncr\egroup
      \global\advance\c@equation\m@ne$$\global\@ignoretrue}
\def\@yeqncr{\@ifnextchar [{\@xeqncr}{\@xeqncr[5pt]}}
\newcommand{\vertspace}{\vskip10.0pt plus 4.0pt minus 6.0pt}
\newcounter{proofstep}
\newcommand{\nextstep}{\refstepcounter{proofstep}\vertspace \par 
          \noindent{\bf Step \theproofstep} \hspace{5pt}}
\newcommand{\firststep}{\setcounter{proofstep}{0}\nextstep}
\newtheorem{theorem}{Theorem}[section]
\newtheorem{lemma}[theorem]{Lemma}
\theoremstyle{definition}
\newtheorem{examples}[theorem]{Examples}
\newtheorem{remark}[theorem]{Remark}
\newcommand{\RRe}{\mathop{\rm Re}}
\newcommand{\R}{\mathop{\rm Re}}
\newcommand{\eps}{\varepsilon}
\newcommand{\la}{\lambda}
\newcommand{\Om}{\Omega}
\newcommand{\RR}{\mathbb{R}}
\newcommand{\Ri}{\mathbb{R}}
\newcommand{\CC}{\mathbb{C}}
\newcommand{\NN}{\mathbb{N}}
\newcommand{\Ni}{\mathbb{N}}
\newcommand{\ZZ}{\mathbb{Z}}
\newcommand{\fra}{\mathfrak{a}}
\newcommand{\supp}{\mathop{\rm supp}}
\newcommand{\one}{\mathds{1}}
\begin{document}
\bibliographystyle{tom}

\thispagestyle{empty}

\vspace*{1cm}
\begin{center}
{\Large\bf Partial spectral multipliers  and partial Riesz transforms for degenerate operators} \\[5mm]

\large A.F.M. ter Elst$^1$ and E.M.  Ouhabaz$^2$

\end{center}

\vspace{5mm}

\begin{center}
{\bf Abstract}
\end{center}

\begin{list}{}
\item

We consider degenerate differential operators 
$A = \displaystyle{\sum_{k,j=1}^d \partial_k (a_{kj} \partial_j)}$ on $L^2(\RR^d)$ 
with real symmetric bounded measurable coefficients. 
Given a function  $\chi \in C_b^\infty(\RR^d)$ (respectively,  $\Om$  a bounded Lipschitz domain) and suppose that 
$(a_{kj}) \ge \mu > 0$ a.e.\  on $ \supp \chi$  (resp.,  a.e.\  on  $\Om$).  
We prove a  spectral multiplier type result:  
if   $F\colon  [0, \infty) \to \CC$ is such that  
$\sup_{t > 0} \| \varphi(.) F(t .)  \|_{C^s} < \infty$ for some non-trivial 
function $\varphi \in C_c^\infty(0,\infty)$ and  some $s > d/2$ 
then $M_\chi F(I+A) M_\chi$ is weak type $(1,1)$ (resp.\  $P_\Om  F(I+A) P_\Om$ is weak type $(1,1)$).  We also prove boundedness on $L^p$ for all $p \in (1,2]$ of  the partial Riesz transforms $M_\chi \nabla (I + A)^{-1/2}M_ \chi$. The proofs  are  based on a criterion  for   a singular integral operator to be weak type $(1,1)$. 
\end{list}

\noindent

\vspace{5mm}
\noindent
January 2012

\vspace{5mm}
\noindent
AMS Subject Classification: 42B15, 45F05.

\vspace{5mm}
\noindent
Keywords: Spectral multipliers, Riesz transforms, singular integral operators, 
degenerate operators, Gaussian bounds.

\vspace{5mm}
\noindent

\noindent
{\bf Home institutions:}    \\[3mm]
\begin{tabular}{@{}cl@{\hspace{10mm}}cl}
1. & Department of Mathematics  & 
  2. & Univ.  Bordeaux,  \\
& University of Auckland   & 
  &IMB, CNRS  UMR 5251,  \\
& Private bag 92019 & 
  &351, Cours de la Lib\'eration  \\
& Auckland & 
  &  33405 Talence \\
& New Zealand  & 
  & France  \\[8mm]
\end{tabular}

\section{Introduction} 
Let $A$ be a non-negative self-adjoint uniformly elliptic operator in divergence  form.
 More precisely, let $a_{kj} = a_{jk} \colon \Ri^d \to \Ri$ be a 
bounded measurable function
for all $j,k \in \{ 1,\ldots,d \} $ and assume  that there exists a $\mu > 0$
such that 
\begin{equation}\label{I1}
\sum_{k,j=1}^d a_{kj}(x) \xi_k \xi_j 
\ge \mu |\xi |^2  \mbox{ for all } \xi = (\xi_1,\ldots,\xi_d) \in \RR^d \mbox{ and } x \in \Ri^d
\end{equation}
The operator $A = - \displaystyle\sum_{k,j=1}^d \partial_k(a_{kj} \partial_j)$, 
defined by quadratic form techniques, is self-adjoint on $L^2(\RR^d)$.
It is a standard fact that 
$-A$ is the generator of a strongly continuous semigroup $(e^{-tA})_{t >0}$ on $L^2(\RR^d)$.
The well known Aronson  estimates assert that $e^{-tA}$ is given an integral kernel 
$p_t$ (called the heat kernel of $A$) which satisfies the Gaussian upper bound:
\begin{equation}\label{I2}
| p_t(x,y) | \le Ct^{-d/2} e^{-c \frac{|x-y|^2}{t}} \mbox{ for all } t > 0 \mbox{ and } x, y  \in \RR^d.
\end{equation}
Here $C$ and $c$ are positive constants.  

In recent years,  harmonic analysis of operator of type $A$ has attracted a 
lot of attention and substantial progress have been made in which upper  bounds
for the heat kernel play  a  fundamental  role.
We mention for example the theory of Hardy and BMO spaces associated with 
such operators (see  for example \cite{DY} and \cite{HLMMY}), spectral multipliers \cite{DOS} 
and Riesz transforms
(see \cite{DuM2}, \cite{Aus2}, \cite{Ouh5}, \cite{She} and the references therein).
 Concerning spectral multipliers, it is known that if $F\colon  [0, \infty) \to \CC$ is a 
bounded measurable function then the operator $F(A)$,
which is well defined on $L^2$ by spectral theory, extends to a bounded operator 
on $L^p$ for all $1 < p < \infty$ provided $F$ satisfies the condition
\begin{equation}\label{I3}
\sup_{t> 0} \| \varphi (.) F(t.)  \|_{C^s} < \infty
\end{equation} 
for some $s > d/2$ and some non-trivial auxiliary function $ \varphi \in C_c^\infty (0,\infty)$.
See Duong--Ouhabaz--Sikora \cite{DOS}, where a more general result is proved.
  Note that 
condition (\ref{I3}) is satisfied if $F$ has $[d/2]+1$ derivatives such that 
$$\sup_{\la > 0} \lambda^k | F^{(k)}(\la) | < \infty \mbox{ for all }  k \in \{ 0, 1,\ldots , [d/2] +1 \} .$$
As an example, one obtains  polynomial estimates on $L^p$  for imaginary powers  of type  
$\| A^{is} \|_{p \to p} \le C (1 + |s |)^{\beta_p}$ for all $\beta_p > d | \frac{1}{2} - \frac{1}{p}|$.
Taking $F(\la) := (1 - \frac{\lambda}{R})_+^\alpha$ one obtains  Bochner--Riesz summability for all $\alpha > d/2$. 

Concerning  Riesz transforms  ${\mathcal R}_k := \partial_k A^{-1/2}$, it is an obvious consequence of the  ellipticity assumption (\ref{I1})  that 
${\mathcal R}_k$ is  bounded on $L^2(\RR^d)$ for all $k \in \{ 1,\ldots ,d \} $.
 As for multiplier results, the Gaussian bound (\ref{I2}) combined with recent developments  on singular integral operators
allow to prove that ${\mathcal R}_k$ is bounded on $L^p(\RR^d)$ for all $p \in (1, 2)$ with the sole assumption (\ref{I1}) and bounded measurable coefficients 
(see Duong--McIntosh \cite{DuM2},  Auscher \cite{Aus2},  Ouhabaz \cite{Ouh5}).
Under weak  regularity assumption on the coefficients one obtains boundedness of 
${\mathcal R}_k$ on $L^p(\RR^d)$ for all $p \in (2,\infty)$ (cf.\  Auscher \cite{Aus2}, Shen \cite{She}). 

In the present paper we wish to study similar problems for degenerate operators.
 Instead of (\ref{I1}) we merely assume that 
\begin{equation}\label{I4}
\sum_{k,j=1}^d a_{kj}(x) \xi_k \xi_j \ge 0 \mbox{ for  all } \xi = (\xi_1,\ldots ,\xi_d) \in \RR^d
   \mbox{ and } x \in \Ri^d.
\end{equation}
In this case, we  define  the form
\begin{equation} \label{formcc}
\fra_0(u,v) = \sum_{k,j=1}^d \int_{\RR^d} a_{kj} \, (\partial_j u) \, (\partial_k v) 
\end{equation}
with form domain $D(\fra_0) = C_c^\infty(\RR^d)$.
If this form is closable, then  $A$ will be the self-adjoint operator associated with its closure.
If not, we take the regular part and consider $A$ as the operator associated with the closure of this 
regular part (see \cite{bSim5} and \cite{AE2}).  

Proving results like the previous ones for  these operators seems unattainable because 
 Gaussian (or Poisson) upper bounds  are not true in general.
Even $L^1$-$L^\infty$ estimates of $e^{-tA}$ are not valid in general.
What we will do is to restrict 
 the operators to parts where the matrix $(a_{kj})$ is elliptic.
 It is proved by ter Elst and Ouhabaz \cite{EO1} that if 
$\chi \in C_{\rm b}^\infty(\RR^d)$ and $\mu > 0$ are such that 
$(a_{kj}(x)) \ge \mu I $ for a.e.\  $x \in \supp \chi$, 
then $M_\chi e^{-tA} M_\chi$ has a H\"older continuous kernel 
$K_t$ which satisfies the Gaussian bound 
\begin{equation}\label{I5}
| K_t(x,y) | \le Ct^{-d/2} e^{-c \frac{|x-y|^2}{t}}  (1 + t)^{d/2} \mbox{ for all } t > 0 \mbox{ and } x, y  \in \RR^d.
\end{equation}
Here $M_\chi $  is the multiplication operator by $\chi$.
The same result holds  for $P_\Om e^{-tA} P_\Om$ if $\Om$ is a bounded Lipschitz domain such that 
$(a_{kj}(x)) \ge \mu I$  for a.e.\ $x \in \Om$ for some $\mu > 0$.
Here $P_\Om$ is the multiplication operator by the indicator function $\one_\Om$ of $\Om$. 

Note that in general one cannot get rid of the extra term $(1+ t)^{d/2}$ in the 
right hand side of (\ref{I5}).
For example, if 
$a_{kj} = \delta_{kj}$ on a smooth bounded domain $\Om$, then $A$ is the Neumann Laplacian on $L^2(\Om)$ and $0$ on $L^2(\RR^d\setminus \Om)$.
It is then clear 
 that the Gaussian bound is not valid without the additional term $(1+t)^{d/2}$.  
Because of  that additional term in (\ref{I5}),  we shall  consider in the sequel 
$ I + A$ instead of $A$ (of course,  one can take $\eps I + A$ for any  $\eps > 0$ to absorb 
the factor $(1+ t)^{d/2}$).

For spectral multipliers and Riesz transforms we will prove the following results.
Given $\chi \in C_{\rm b}^\infty(\RR^d)$  (resp.,  $\Om$ a bounded Lipschitz domain) 
such that $(a_{kj}(x)) \ge \mu I $  for a.e.\   $x \in \supp \chi$ (resp., for a.e.\  $x \in \Om$) 
for some constant $\mu > 0$.
The main theorems of this paper are the 
following.

\begin{theorem} \label{theoremA}
Let  $F\colon  [0, \infty) \to \CC$ be a bounded function such that 
$$\sup_{t> 0} \| \varphi (.) F(t.) \|_{C^s} < \infty$$
for some $s > d/2$ and some non-trivial function $\varphi \in C_c^\infty(0,\infty)$.
Then $M_\chi F(I+A) M_\chi$ {\rm (}resp., $P_\Om F(I+A) P_\Om${\rm )} is bounded 
on $L^p(\RR^d)$ for all $1 < p < \infty$. 
\end{theorem}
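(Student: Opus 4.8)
The plan is to deduce Theorem \ref{theoremA} from the general machinery of Duong--Ouhabaz--Sikora \cite{DOS} applied not to the (degenerate) operator $A$ itself, but to the cut-off semigroup $M_\chi e^{-tA} M_\chi$ (resp.\ $P_\Om e^{-tA} P_\Om$), using the Gaussian bound \eqref{I5} established in \cite{EO1}. The difficulty is that $M_\chi e^{-tA} M_\chi$ is not a semigroup, so one cannot simply invoke a spectral multiplier theorem for a semigroup generator; the point is rather to verify directly the singular-integral / finite-propagation type hypotheses that underlie such theorems. Concretely, writing $L := I + A$, one has the $L^2$ functional calculus giving $F(L)$ bounded on $L^2$ with norm $\|F\|_\infty$, and one wants $M_\chi F(L) M_\chi$ to be of weak type $(1,1)$; interpolation with the $L^2$ bound and duality then yield $L^p$-boundedness for all $p\in(1,\infty)$.

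The key steps, in order, would be the following. First, fix a dyadic partition of unity adapted to $\varphi$ and decompose $F(L) = \sum_{\ell\in\ZZ} F_\ell(L)$ where $F_\ell(\lambda) = \varphi(2^{-\ell}\lambda) F(\lambda)$ (up to handling the part of the spectrum near $0$, which for $L = I+A$ is bounded below by $1$, so only finitely many or one-sided-tail terms matter — this is exactly why $I+A$ rather than $A$ is used). Second, for each piece use the Fourier inversion / wave-equation representation to write $F_\ell(L)$ in terms of $e^{-tL}$ with $t$ at scale $2^{-\ell}$, and combine with the Gaussian kernel bound \eqref{I5} for $M_\chi e^{-tA} M_\chi$ to obtain a kernel bound for $M_\chi F_\ell(L) M_\chi$ of the form $|K_{F_\ell}(x,y)| \lesssim 2^{\ell d/2}(1 + 2^\ell|x-y|)^{-s'}$ with $s' > d/2$, plus the corresponding Hörmander-type regularity (Lipschitz) estimate in $y$, using the Hölder continuity of $K_t$ from \cite{EO1} and the assumed $C^s$-bound on $\varphi(\cdot)F(t\,\cdot)$. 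Third, sum over $\ell$ to verify that $M_\chi F(L) M_\chi$ has a kernel off the diagonal satisfying the Hörmander integral condition $\sup_{y,z}\int_{|x-y|\ge 2|y-z|} |K(x,y) - K(x,z)|\,dx < \infty$. Fourth, invoke the standard Calderón--Zygmund theorem for singular integrals with an $L^2$-bounded operator satisfying the Hörmander condition to conclude weak type $(1,1)$, hence (with the $L^2$ bound and Marcinkiewicz interpolation) boundedness on $L^p$ for $1 < p \le 2$, and then on $2 \le p < \infty$ by duality since $A$ is self-adjoint and $M_\chi$, $P_\Om$ are self-adjoint. The resp.\ statement for $P_\Om$ is identical, replacing $M_\chi$ by $P_\Om$ throughout and using the $P_\Om e^{-tA} P_\Om$ version of \eqref{I5}.

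The main obstacle I expect is the derivation of the kernel and regularity estimates for $M_\chi F_\ell(L) M_\chi$: because $M_\chi e^{-tA} M_\chi$ is not a semigroup, the usual composition/reproducing tricks ($e^{-tL} = e^{-tL/2}e^{-tL/2}$) are unavailable, so one must either work through the finite-propagation-speed property of the wave operator $\cos(t\sqrt{A})$ localized by $\chi$, or exploit composition identities such as $M_\chi e^{-tA} M_\chi = M_\chi e^{-tA} M_{\tilde\chi} M_\chi$ for a suitable $\tilde\chi \equiv 1$ on $\supp\chi$ together with the off-diagonal decay of $M_\chi e^{-tA}(I - M_{\tilde\chi})$; in the $P_\Om$ case one similarly uses that $\one_\Om$ is $1$ on a neighbourhood only in the interior, so boundary effects must be absorbed into the Gaussian tail. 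In fact the cleanest route — and presumably the one the paper takes, given the abstract in the excerpt — is to isolate a general criterion: an $L^2$-bounded operator of the form $M_\chi T M_\chi$ where $T = F(I+A)$ and $e^{-t(I+A)}$ satisfies \eqref{I5} is weak type $(1,1)$ under the stated $C^s$ condition; Theorem \ref{theoremA} is then an immediate application. I would state and prove that criterion first, and the verification of its hypotheses for our $F$ reduces to the dyadic decomposition and Gaussian-bound bookkeeping sketched above.
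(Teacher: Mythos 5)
Your primary route---obtain kernel bounds for $M_\chi F_\ell(I+A) M_\chi$, sum over $\ell$ to verify the classical H\"ormander condition $\sup_{y,z}\int_{|x-y|\ge 2|y-z|}|K(x,y)-K(x,z)|\,dx<\infty$, and invoke the standard Calder\'on--Zygmund theorem---is precisely the route that is \emph{not} available here, and indeed is the entire raison d'\^etre of the Duong--McIntosh machinery the paper builds on. With coefficients $a_{kj}\in L^\infty$ only and a multiplier $F$ controlled only in $C^s$ with $s>d/2$, one does not have any usable regularity of the kernel of $M_\chi F(I+A)M_\chi$ in the second variable: the H\"older continuity from De Giorgi--Nash--Moser applies to the heat kernel $K_t$, with seminorm constants degenerating as $t\to 0$, and it does not transfer to a difference estimate for the singular kernel of $F(I+A)$. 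What Lemma~\ref{le1} of the paper actually gives is a weighted $L^2$ bound on $|K(x,y)|$ itself (no differences in $y$), which after Cauchy--Schwarz yields an $L^1$ off-diagonal estimate; this is the Duong--McIntosh substitute for H\"ormander, not the H\"ormander condition. So step three of your plan would fail, and with it steps four and five.

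Your closing paragraph correctly senses that one should instead isolate an abstract weak-$(1,1)$ criterion and apply it, but you do not identify the specific twist the paper needs, and this twist is the real content of the section. The paper explains that even the original Duong--McIntosh criterion (test $T-TA_t$ with $A_t$ having Gaussian-bounded kernels) cannot be applied with the natural choice $A_t=M_\chi e^{-t(I+A)}M_\chi$: one gets $TA_t=M_\chi F(I+A)M_\chi^2 e^{-t(I+A)}M_\chi$, and the stray $M_\chi^2$ sits between $F(I+A)$ and $e^{-t(I+A)}$, blocking the regularization. The fix is Theorem~\ref{th1}/Theorem~\ref{theoremC}: replace $T-TA_t$ by $T-SA_t$ with a possibly different auxiliary $L^2$-bounded operator $S$, and weaken the Gaussian bound on $A_t$ to an $L^1$--$L^{q_0}$ off-diagonal estimate (condition~\eqref{au2}). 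Then take $T=M_\chi F_n(H)M_\chi$ (dyadic pieces), $S=M_\chi F_n(H)$, and $A_t=e^{-t^2H}M_\chi$---note that $A_t$ is \emph{not} $M_\chi e^{-t^2 H}M_\chi$ and does not have a known Gaussian kernel, only the Davies--Gaffney/off-diagonal estimate of Lemma~\ref{le2}. With these choices $SA_t=M_\chi F_n(H)e^{-t^2H}M_\chi$ and the regularization works; the weighted $L^2$ estimate of Lemma~\ref{le1}, together with the estimates \eqref{2.12}--\eqref{2.13} and the summation over the dyadic scales, verifies condition~\eqref{DMop}. Your sketch omits both the generalization to $S\neq T$ and the precise choice of $S$ and $A_t$, and misattributes the Gaussian bound \eqref{I5} to $e^{-t(I+A)}$ rather than to $M_\chi e^{-t(I+A)}M_\chi$.
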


\begin{theorem} \label{theoremB}
 For all $k \in \{  1, \ldots , d \} $ the Riesz transform type operator 
$M_\chi \partial_k (I+ A)^{-1/2}M_\chi$ is  bounded on $L^p(\RR^d)$ for all
$1 < p \le 2$.
\end{theorem}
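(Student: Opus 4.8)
The plan is to reduce both statements to the standard theory of singular integral operators with kernels controlled via heat kernel bounds, using the Gaussian estimate~(\ref{I5}) for the conjugated semigroup $M_\chi e^{-tA} M_\chi$ (resp.\ $P_\Om e^{-tA} P_\Om$). For Theorem~\ref{theoremA} I would first note that $M_\chi F(I+A) M_\chi$ is already bounded on $L^2$ by the spectral theorem, since $F$ is bounded. By duality and interpolation it then suffices to prove that the operator is weak type $(1,1)$; composing with the bounded operator $M_\chi$ on both sides is harmless. The key is that although $e^{-tA}$ itself need not satisfy Gaussian bounds, the bilaterally truncated semigroup does, by~(\ref{I5}), and this is exactly the ingredient needed to run a Duong--McIntosh / Blunck--Kunstmann type argument: one writes $F(I+A) = \int_0^\infty e^{-t(I+A)} \, d\nu(t)$-style decompositions or, more robustly, uses the generalized Calder\'on--Zygmund theory for operators without kernel bounds (the ``weak type $(1,1)$ criterion'' the abstract advertises), with the semigroup $e^{-tA}$ replaced by its $\chi$-conjugate playing the role of the approximate identity. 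The Mauceri--Meda / DOS machinery then converts the hypothesis $\sup_{t>0}\|\varphi(.)F(t.)\|_{C^s}<\infty$ with $s>d/2$ into the required H\"ormander-type off-diagonal bounds.

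More concretely, I would split $F(I+A)M_\chi = \sum_{j} F(I+A)(I - e^{-r_j^2(I+A)})^{m} M_\chi + \text{(smoothed part)}$ on a dyadic scale, control the ``local'' part using $L^2$ boundedness and the finite propagation / Davies--Gaffney estimates that follow from~(\ref{I5}), and control the ``global'' part using the $C^s$ bound on $\varphi(.)F(t.)$, which gives rapid decay of the kernel of $\phi(t(I+A))F(I+A)$ away from the diagonal at scale $\sqrt t$. Crucially every operator appearing is sandwiched between two copies of $M_\chi$ (or $P_\Om$), so whenever a heat semigroup factor appears I can insert $M_\chi^2$-type cutoffs and invoke~(\ref{I5}) rather than the (false) global Gaussian bound; the outer $M_\chi$ also ensures the relevant maximal/square functions are taken over the region where everything is genuinely sub-Gaussian. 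The main obstacle here is bookkeeping: one must verify that the insertion of cutoffs does not destroy the algebraic identities used in the standard proof (e.g.\ $M_\chi$ does not commute with $F(I+A)$), so the argument has to be arranged so that only \emph{one} conjugated semigroup factor is needed at each step, or so that commutator terms are lower order; this is presumably where the paper's abstract singular-integral criterion does the heavy lifting.

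For Theorem~\ref{theoremB}, $L^2$-boundedness of $M_\chi \partial_k (I+A)^{-1/2} M_\chi$ is immediate: on $\supp\chi$ the form is coercive, so $\|\nabla u\|_2 \lesssim \|(I+A)^{1/2} u\|_2$ when things are localized there, and the outer $M_\chi$ confines us to that region. To get $L^p$ for $1<p<2$, I would again aim for weak type $(1,1)$ and interpolate. Write $(I+A)^{-1/2} = c\int_0^\infty e^{-t(I+A)} \, \frac{dt}{\sqrt t}$ and decompose the integral at a scale adapted to a Calder\'on--Zygmund ball of radius $r$: the part $t \le r^2$ is handled by $L^2$ off-diagonal estimates (Davies--Gaffney for $\nabla e^{-tA}$ localized by $\chi$, which again follow from~(\ref{I5}) by the usual Caccioppoli-plus-gradient-bound argument), and the part $t \ge r^2$ is handled by regularity in the $y$-variable, using that $\partial_k e^{-t(I+A)}$ has an integrable-in-$t$ H\"older modulus of continuity on $\supp\chi$. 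The essential point, as before, is that all estimates are needed only for the $\chi$-truncated objects, for which~(\ref{I5}) supplies full Gaussian control.

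The step I expect to be the real obstacle in Theorem~\ref{theoremB} is obtaining the \emph{gradient} Gaussian/Davies--Gaffney bound for the truncated semigroup, i.e.\ an estimate of the form $\|\one_E \, \chi \, \nabla e^{-tA} \, \chi \, \one_G\|_{2\to2} \lesssim t^{-1/2} e^{-c\,d(E,G)^2/t}$: the kernel bound~(\ref{I5}) is stated only for $M_\chi e^{-tA} M_\chi$ itself, not for its gradient, so one must derive the gradient version by combining~(\ref{I5}) with an interpolation/Caccioppoli argument exploiting ellipticity of $(a_{kj})$ on $\supp\chi$ (taking a slightly larger cutoff $\tilde\chi$ equal to $1$ near $\supp\chi$ with $(a_{kj})\ge\mu$ still on $\supp\tilde\chi$, so that $\|\nabla \tilde\chi e^{-tA}\chi f\|_2$ is controlled via the form). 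Once this localized gradient bound is in hand, plugging it into the paper's weak-type-$(1,1)$ criterion finishes the proof exactly as in the uniformly elliptic case treated by Duong--McIntosh and Auscher.
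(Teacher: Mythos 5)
Your proposal for Theorem~\ref{theoremB} follows essentially the same route as the paper: $L^2$-boundedness from coercivity of the form on $\supp\chi$, the subordination formula $H^{-1/2}=c\int_0^\infty e^{-sH}\,ds/\sqrt{s}$ fed into the modified weak type $(1,1)$ criterion with $S=M_\chi\partial_k H^{-1/2}$ and $A_t=e^{-t^2H}M_\chi$, and, as the key technical step you correctly single out, a weighted $L^2$ (Davies--Gaffney type) gradient estimate for $M_\chi\nabla e^{-sH}M_\chi$ obtained by a Caccioppoli-type argument from the partial Gaussian bound and ellipticity on $\supp\chi$ (the paper's Lemma~\ref{lemm3}, which also inserts an approximation $a_{kj}+\tfrac1n\delta_{kj}$ to avoid form-domain issues in the degenerate case). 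This matches the paper's proof in all essentials.
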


Now we discuss how we prove these results.
In the elliptic case, besides the Gaussian bound (\ref{I2}), the proof of boundedness of spectral multipliers or Riesz transforms
rely on a criterion proved by Duong and McIntosh \cite{DuM3} for singular integral operators to be weak type $(1,1)$. 
This criterion says that if $T$ is bounded on $L^2$ with a (singular) kernel~$K$ 
such that there exists a family of operators $(A_t)_{t > 0}$ given by integral kernels $a_t$ which satisfy Gaussian (or Poisson) bounds, $TA_t$ is also
given by a (singular) kernel $K_t$ and 
there are $C,\delta > 0$ such that 
\begin{equation}\label{I6}
\int_{|x-y| \ge \delta \sqrt{t}} | K(x,y) - K_t(x,y) | \, dx \le C 
\end{equation}
for all $t > 0$ and a.e.\ $y$,
then $T$ is weak type $(1,1)$.
In applications to spectral multipliers  of  elliptic operators we start with  $T = F(A)$ and  one takes  $A_t = e^{-tA}$.
 Therefore, $K_t$ is the kernel of  the operator $F(A) e^{-tA }$ which can be seen as  a regularization   of   $F(A)$.
 In the degenerate case and because of (\ref{I5}), it is tempting to choose 
 $A_t = M_\chi e^{-t(I+A)}M_\chi$.
Then, 
 $$ T A_t  =  M_\chi F(I+A) M_\chi M_\chi e^{-t(I+A)} M_\chi =  M_\chi F(I+A) M_\chi^2 e^{-t(I+A)} M_\chi.$$ 
Now, the presence of $M_\chi^2$ does not allow to regularize $F(I+A)$ by $e^{-t(I+A)}$.
The  simple fact that we do not have $F(I+A)$ next to $e^{-t(I+A)}$ in the expression of 
$TA_t$  destroys this strategy.
The same problem occurs for the Riesz transform $M_\chi \partial_k (I+A)^{-1/2} M_\chi$.
 To overcome this difficulty we prove a version of the 
Duong--McIntosh criterion that is  suitable for our purpose.
It  reads as follows (see Theorems \ref{th1} and \ref{th2} together
with Remark~\ref{rpartmult202} for precise and quantitive statements). 

\begin{theorem} \label{theoremC}
 Let  $T$ be a bounded linear operator on $L^2$ and   $(A_t)_{t > 0}$  
a family of linear operators which satisfy  $L^1$-$L^2$ off-diagonal estimates.
Suppose that  there exists a bounded linear operator $S $ on $L^2$  and 
$\delta,W > 0$ such that 
\begin{equation}
\int_{|x-y| \geq (1 + \delta) t} | (T  - SA_t) u(y) | \, dy 
\le W \| u \|_1
\label{etheoremC;1}
\end{equation}
for all $x \in \RR^d$, $t > 0$ and  $u \in L^1 \cap L^\infty$ supported  in the ball $B(x, t)$. Then 
$T$ is weak type $(1,1)$. 
\end{theorem}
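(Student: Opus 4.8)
The plan is to adapt the classical Calder\'on--Zygmund decomposition, following the strategy of Duong--McIntosh, but organised around the operator $SA_t$ rather than the more usual $T e^{-tA}$. Fix $u \in L^1 \cap L^2$ and a height $\alpha > 0$. Applying the Calder\'on--Zygmund decomposition to $|u|$ at height $\alpha$, I would write $u = g + b = g + \sum_i b_i$, where $g$ is the good part with $\|g\|_\infty \le C\alpha$ and $\|g\|_1 \le \|u\|_1$, and each $b_i$ is supported in a cube (or ball) $Q_i$ with center $x_i$ and radius $r_i$, has mean zero, satisfies $\|b_i\|_1 \le C \alpha |Q_i|$, and $\sum_i |Q_i| \le C\alpha^{-1}\|u\|_1$. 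The good part is handled by the $L^2$-boundedness of $T$ and Chebyshev in the standard way, so the entire difficulty is concentrated in estimating $|\{ x : |Tb(x)| > \alpha \}|$.

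For the bad part I would split, for each $i$, as $Tb_i = SA_{t_i} b_i + (T - SA_{t_i})b_i$ with the natural choice $t_i := r_i$ (so that $b_i$ is supported in a ball of radius comparable to $t_i$, which is exactly the hypothesis of \eqref{etheoremC;1}). The first term is controlled on the dilated ball $Q_i^* := B(x_i, (1+\delta) t_i)$ using the $L^2$-boundedness of $S$ together with the $L^1$-$L^2$ off-diagonal estimates for $A_{t_i}$: these give $\|SA_{t_i} b_i\|_{L^2(\RR^d \setminus Q_i^*)}$ decaying in the distance from $Q_i$, hence after summing a bound of the form $\alpha^{-1}\|u\|_1$ on the relevant level set, by Cauchy--Schwarz over the ball of radius $2^k r_i$ and summation in $k$ — this is the point where one genuinely uses that $A_t$ has $L^1$-$L^2$ (rather than merely $L^1$-$L^1$) off-diagonal decay, to compensate the factor $|2^k Q_i|^{1/2}$. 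The second term is controlled directly by \eqref{etheoremC;1}: integrating over the complement of $Q_i^*$ gives $\int_{\RR^d \setminus Q_i^*} |(T - SA_{t_i})b_i(y)|\,dy \le W\|b_i\|_1 \le CW\alpha|Q_i|$, and summing over $i$ yields a bound $\le CW\|u\|_1$, so Chebyshev on $\RR^d \setminus \bigcup_i Q_i^*$ finishes this piece. Finally $|\bigcup_i Q_i^*| \le C\sum_i |Q_i| \le C\alpha^{-1}\|u\|_1$ directly from the decomposition.

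The main obstacle is the first term, $SA_{t_i}b_i$, off the dilated ball: one must extract genuine decay in the distance from $Q_i$ to run the summation over the dyadic annuli and over $i$. This is where the precise form of the $L^1$-$L^2$ off-diagonal estimates for $(A_t)_{t>0}$ must be used quantitatively — a bare $L^1$-$L^2$ bound without spatial decay would not suffice, so implicitly the hypothesis must be read as Gaussian-type (or at least polynomial) off-diagonal decay, consistent with \eqref{I5} in the intended application. A secondary technical point is that \eqref{etheoremC;1} is stated for $u \in L^1 \cap L^\infty$ supported in $B(x,t)$, which is exactly the regularity the atoms $b_i$ enjoy (they lie in $L^\infty$ since $u \in L^\infty$ on the good set, and one may truncate), so no approximation argument beyond the standard density of $L^1 \cap L^2$ in $L^1$ is needed. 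Once the weak type $(1,1)$ bound is established on $L^1 \cap L^2$, it extends to all of $L^1$ by density, completing the proof.
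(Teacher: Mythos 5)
Your overall architecture (Calder\'on--Zygmund decomposition, good part via $L^2$-boundedness of $T$, splitting $Tb_i = SA_{r_i}b_i + (T-SA_{r_i})b_i$ with $t_i=r_i$, and handling the second term by integrating the hypothesis \eqref{etheoremC;1} over $\RR^d\setminus B(x_i,(1+\delta)r_i)$) matches the paper's proof. However, your treatment of the term $\sum_i SA_{r_i}b_i$ contains a genuine gap. You propose to show that $\|SA_{r_i}b_i\|_{L^2(\RR^d\setminus Q_i^*)}$ decays with the distance from $Q_i$ and then to sum over dyadic annuli. But $S$ is only assumed to be a bounded operator on $L^2$; it has no kernel, no locality, and no off-diagonal decay of any kind. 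The off-diagonal decay of $A_{r_i}b_i$ is destroyed the moment you apply $S$: there is no estimate of $SA_{r_i}b_i$ on $\RR^d\setminus Q_i^*$ better than the global bound $\|S\|_{2\to2}\|A_{r_i}b_i\|_2$, and in particular no $L^1$ control off the dilated ball. This is precisely the point where the generalization from $S=T$ (Duong--McIntosh) to arbitrary $S$ forces a different order of operations.

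The correct route, which the paper follows, is to postpone $S$ entirely: first prove the \emph{global} estimate $\bigl\| \sum_i A_{r_i}b_i \bigr\|_2 \le C\,\alpha^{1/2}\|f\|_1^{1/2}$, by dualizing against $v\in L^2$ with $\|v\|_2=1$, using the off-diagonal estimates of $A_{r_i}$ on the annuli $C_j(x_i,r_i)$, bounding the averages of $|v|^2$ by the Hardy--Littlewood maximal function on $B_i$, and summing over $j$ via the condition $\sum_j 2^{jd}g(j)<\infty$ and over $i$ via the bounded overlap of the $B_i$. Only then does one apply the global $L^2$-boundedness of $S$ to the whole sum and conclude by Chebyshev at level $\alpha$ in $L^2$ (not $L^1$), giving measure at most $C\|S\|_{2\to2}^2\,\alpha^{-1}\|f\|_1$. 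Your remaining steps (the $(T-SA_{r_i})b_i$ term, the measure of $\bigcup_i Q_i^*$, the density argument) are fine, but as written the proposal does not close without replacing the local-decay argument for $SA_{r_i}b_i$ by this global duality/maximal-function estimate for $\sum_i A_{r_i}b_i$.
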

 
Note that the estimate (\ref{etheoremC;1})
is satisfied if  $T$ and $SA_t$ are given 
by {\rm (}singular{\rm )}  kernels $K$ and  $K_t$ and there are $C,\delta > 0$ such that 
 $$\int_{|x-y| \ge \delta \sqrt{t}} | K(x,y) - K_t(x,y) | \, dx \le C$$
for all $t > 0$ and a.e.\ $y \in \RR^d$.

Theorem~\ref{theoremC} gives more freedom by choosing any appropriate operator $S$ and not necessarily $S = T$. 
Coming back to spectral multipliers for degenerate operators $A$, we had 
$T = M_\chi F(I+A) M_\chi $ and we choose  now $S = M_\chi F(I+A) $ and $A_t = e^{-t(I+A)} M_\chi$.
Then $TA_t = M_\chi F(A+I) e^{-t(I+A)} M_\chi$ for which we can prove the estimate in Theorem~\ref{theoremC}.
Similarly for the Riesz transforms where $T = M_\chi \partial_k (I+A)^{-1}M_\chi$, we take $S = M_\chi \partial_k (I+A)^{-1}$ which turns to be bounded on $L^2$ and 
$A_t = e^{-t(I+A)} M_\chi$.
We emphasize   that $A_t = e^{-t(I+A)} M_\chi$ satisfies $L^1$-$L^2$ 
off-diagonal estimates but it is not known whether it satisfies Gaussian upper bounds in 
general\footnote{ under the additional assumption that $a_{kj} \in 
W^{1,\infty} (\RR^d)$, we proved recently in \cite{EO2} that $e^{-t(I+A)} M_\chi$ has a kernel which satisfies  a Gaussian bound.}.
 We believe that our version of the Duong--McIntosh criterion can be used in other circumstances in which a products of several operators come into play.
Also, as in \cite{DuM3}, our version holds for operators on domains of spaces of homogeneous type.

\smallskip

\noindent {\bf Notation.}  
We fix some notation which we will use throughout this paper.
 If $(X, \rho, \mu)$ is  a metric measured space, $x \in X$, $r > 0$ and $j \in \Ni$,
then we denote by 
$B(x,r) :=  \{ y \in X : \rho(x,y) < r \} $ the open ball of $X$ with  centre $x$ and radius $r$, 
the annulus  $C_j(x,r) = B(x, 2^{j+1}r)\setminus B(x, 2^jr)$ if $j \ge 2$ and $C_1(x,r) = B(x, 4r)$.
Let $v(x,r) = \mu(B(x,r))$ be the volume of the ball $B(x,r)$.
Next,  $\| T \|_{p \to q}$ is the norm of $T$ as an operator from $L^p$ to $L^q$.
If $E$ is a measurable set, then $P_E$ denotes the multiplication operator
by the indicator function $\one_E$ of $E$.
If $s \in (0,\infty) \setminus \Ni$ then we denote by $C^s$ the space of all Lipschitz functions  
on $[0,\infty)$ of order $s$ 
(i.e., functions which are continuously differentiable up to  $[s]$ and the derivative of order 
$[s]$ is H\"older continuous of order  $s-[s]$).
By $W^{r,p}$ we denote the classical Sobolev spaces on $\RR^d$.

All our operators are linear operators.

We emphasize that we shall use $C, C', c,\ldots $ for all inessential constants.
A constant $C$ may differ from line to line, even within one line.

\section{Singular integral operators}

Let $(X, \mu, \rho)$ be a metric measured space.
 We shall assume that $0 < v(x, r) < \infty$ for all $x \in X$ and $r > 0$ 
and that $X$  is a space of homogeneous type. 
This means that it satisfies the following doubling condition
 \begin{equation}\label{doubl1}
 v(x,2r) \le C_{0} v(x, r) 
 \end{equation}
for some $C_0 > 0$, uniformly for all $x \in X$ and $r > 0$.
If  (\ref{doubl1}) is satisfied then  there exist
 positive constants $C_{1}$ and $d$ such that 
 \begin{equation}\label{doubl2}
 v(x,\la r) \le C _{1}\la ^d v(x, r) 
 \end{equation}
for all $x \in X$ and $r \geq 1$.
Let $\Om$ be an open subset of $X$.
It is endowed with $\rho$ and $\mu$ but $(\Om, \mu, \rho)$ is not necessarily a space of homogeneous type. 
Let $T$ be a bounded linear operator on $L^{p_0}(\Om) := L^{p_0}(\Om, \mu)$ for some $p_0 \in [1, \infty)$.
We say that $T$ is given by a kernel $K \colon  \Om \times \Om \to \CC$ 
if $K$ is measurable and 
\begin{equation}\label{kernel}
Tu(x) = \int_\Om K(x,y) \, u(y) \, d\mu(y)
\end{equation}
for all $u \in L^{p_0}(\Om)$ with bounded support and a.e.\  $x$ outside the support of $u$.
We also say that $K$ is the associated kernel of $T$. 
A classical problem in harmonic analysis is to find conditions on the 
kernel $K$ which allow to extend the operator $T$ from $L^{p_0}(\Om)$ to other $L^p(\Om)$-spaces. 
Several results are known in this direction.
We refer the reader to  \cite{Ste3}, \cite{DuM3}, \cite{BK1}, \cite{Aus2} and the references therein. 

The main result in \cite{DuM3} states  that if there exists a family of bounded operators 
$A_t$, with $t > 0$, which are given by  integral kernels $a_t$ satisfying a Gaussian or Poisson 
 estimate and if the  associated  kernel  of $T - TA_t$ does not oscillate too much in a certain  sense then $T$ is weak type $(1,1)$.
Here we prove by the same method that if there exists a bounded operator $S$ on 
$L^{q_0}(X)$ for some $q_0 \in (1, \infty)$  such that  the  associated  kernel of $T - SA_t$ 
does not oscillate too much  then $T$ is weak type $(1,1)$.
  As explained in the introduction, this new version gives more  freedom by choosing  any appropriate $S$ which  may not coincide with $T$.
  This extension turns out to be powerful to prove    spectral multiplier type results  as well as Riesz 
transforms  for degenerate  operators,
whereas it is not clear how to apply the  condition from \cite{DuM3}.  
Note also that, following ideas from \cite{BK1} and \cite {Aus2} we can weaken the assumption on the kernel of $A_t$. 
Instead of assuming a Gaussian or Poisson bound, we  merely assume  an  $L^1$-$L^{q_0}$ off-diagonal estimate (see (\ref{au2}) below).
This  difference is again illustrated in our application to degenerate operators. 
In addition it is possible to formulate the result in \cite{DuM3} without reference
to the kernels (see also the remark immediately after the next theorem).

We first state and prove the result in the case $\Om = X$. 

\begin{theorem}\label{th1}
Let  $T$ be  a non-zero bounded linear operator on $L^{p_0}(X)$ for some $p_0 \in (1, \infty)$.
 Suppose there exists a bounded linear  operator $S$ on $L^{q_0}(X)$ for some $q_0 \in (1, \infty)$, 
 a family of
 bounded linear operators $(A_t)_{t > 0}$ on $L^{q_0}(X)$ 
and a sequence $(g(j))_{j \in \Ni}$ in $\Ri$ 
such that 
\begin{equation}\label{au2}
 \left(\frac{1}{v(x,2^{j+1}t)}\int_{C_j(x,t)} | A_{t}f |^{q_0} \right)^{1/q_0}
\le g(j) \frac{1}{v(x,t)}\int_{B(x,t)} | f | 
\end{equation}
for  all  $x \in X$, $t > 0$, $j \in \Ni$ and  $f \in L^{q_0}(B(x,t))$,
and  $\sum_{j=1}^\infty 2^{jd}g(j) < \infty$. 
Finally, suppose there exist $\delta,W > 0$ such that  
\begin{equation}\label{DMop}
\int_{X \setminus B(x,(1+\delta)t )} | (T  - SA_t) u |  \le W \| u \|_1
\end{equation} 
for all $x \in X$, $t > 0$ and  $u \in L^1(X) \cap L^\infty(X)$ supported  in the ball $B(x, t)$.  
Then $T$ is a weak type $(1,1)$ operator with
\begin{equation}\label{L1} 
\| T \|_{L^1 \to L^{1,w}} 
\le C (1+\delta)^d \left( W + \| T \|_{p_0 \to p_0} 
   + \| S \|_{q_0 \to q_0}^{q_0}  \| T \|_{p_0 \to p_0}^{1-q_0} \right).
\end{equation}
Here $C$ is a constant depending only on the constants in \eqref{doubl2}.
In particular, $T$ extends to a bounded operator on $L^p(X)$ for all $p \in (1, p_0)$.
\end{theorem}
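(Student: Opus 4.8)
The plan is to follow the classical Calderón–Zygmund scheme, adapted to the hypothesis (\ref{DMop}) with the auxiliary operator $S$ in place of $T$. Fix $u \in L^1(X)$ and $\alpha > 0$; we must bound $\mu(\{ x : |Tu(x)| > \alpha \})$ by $C\alpha^{-1}\|u\|_1$ times the constant in (\ref{L1}). First I would perform a Calderón–Zygmund decomposition of $u$ at height $\alpha$, obtaining $u = g + b$ with $\|g\|_1 \le \|u\|_1$, $\|g\|_\infty \le C\alpha$, and $b = \sum_i b_i$, where each $b_i$ is supported in a ball $B_i = B(x_i, r_i)$, has mean value zero, satisfies $\|b_i\|_1 \le C\alpha\, \mu(B_i)$, and $\sum_i \mu(B_i) \le C\alpha^{-1}\|u\|_1$, with bounded overlap of the dilated balls. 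For the good part, since $g \in L^1 \cap L^\infty \subset L^{p_0}$ with $\|g\|_{p_0}^{p_0} \le \|g\|_\infty^{p_0-1}\|g\|_1 \le C(\alpha)^{p_0 - 1} \alpha^{-1}\cdots$ — more precisely $\|g\|_{p_0}^{p_0} \le C\alpha^{p_0-1}\|u\|_1$ — Chebyshev and the $L^{p_0}$-boundedness of $T$ give $\mu(\{|Tg| > \alpha/2\}) \le C \|T\|_{p_0\to p_0}^{p_0} \alpha^{-1}\|u\|_1$, which accounts for the $\|T\|_{p_0\to p_0}$ term.

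The heart of the matter is the bad part. For each $i$ set $t_i := r_i$ (the radius of $B_i$) and write, on the complement of the enlarged ball $B_i^* := B(x_i, 2(1+\delta) r_i)$,
\[
Tb_i = (T - SA_{t_i})b_i + S A_{t_i} b_i .
\]
Let $E := \bigcup_i B_i^*$; then $\mu(E) \le C(1+\delta)^d \sum_i \mu(B_i) \le C(1+\delta)^d \alpha^{-1}\|u\|_1$ by (\ref{doubl2}), so it suffices to estimate $Tb$ on $X \setminus E$. The term $\sum_i (T - SA_{t_i})b_i$ is controlled directly in $L^1(X \setminus E)$: since $b_i$ is supported in $B(x_i, t_i)$ and lies in $L^1 \cap L^\infty$, hypothesis (\ref{DMop}) gives $\int_{X \setminus B(x_i,(1+\delta)t_i)} |(T - SA_{t_i})b_i| \le W \|b_i\|_1 \le C W \alpha\, \mu(B_i)$; summing and using $\sum_i \mu(B_i) \le C\alpha^{-1}\|u\|_1$ yields the $W$-contribution. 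For the remaining piece I would show $\| S(\sum_i A_{t_i} b_i) \|_{q_0}$ is controlled, using the off-diagonal bound (\ref{au2}): decomposing $X$ into the annuli $C_j(x_i, t_i)$ and estimating the $L^{q_0}$ mass of $A_{t_i}b_i$ on each by $g(j)\, v(x_i,2^{j+1}t_i)^{1/q_0}\, v(x_i,t_i)^{-1}\|b_i\|_1$, then summing over $j$ with the weight $2^{jd}$ coming from doubling (\ref{doubl2}) to absorb the volume ratios, one obtains $\|A_{t_i}b_i\|_{q_0} \le C\big(\sum_j 2^{jd} g(j)\big)\, \mu(B_i)^{1/q_0 - 1}\|b_i\|_1 \le C\alpha\, \mu(B_i)^{1/q_0}$; combined with the bounded-overlap / disjointness structure and a standard $\ell^{q_0}$-orthogonality-type estimate over $i$ this gives $\|\sum_i A_{t_i}b_i\|_{q_0}^{q_0} \le C\alpha^{q_0-1}\|u\|_1$, whence $\mu(\{|S\sum_i A_{t_i}b_i| > \alpha/4\} ) \le C \alpha^{-q_0}\|S\|_{q_0\to q_0}^{q_0}\,\alpha^{q_0-1}\|u\|_1$. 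Rescaling $\alpha$ appropriately (or balancing the two bad-part heights) produces the mixed term $\|S\|_{q_0\to q_0}^{q_0}\|T\|_{p_0\to p_0}^{1-q_0}$ in (\ref{L1}).

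The main obstacle I anticipate is the bookkeeping in the last step: controlling $\|\sum_i A_{t_i} b_i\|_{q_0}$ globally rather than ball-by-ball. Unlike the classical argument, where $S = T$ and one estimates $Tb$ on $X\setminus E$ directly, here the $L^{q_0}$-norm of $\sum_i A_{t_i}b_i$ must be taken over all of $X$, so the almost-disjointness of the $B_i$ (and controlled overlap of the $B_i^*$) must be leveraged carefully — essentially a vector-valued or $T\!T^*$-type argument showing that the pieces $A_{t_i}b_i$ behave like an almost-orthogonal family in $L^{q_0}$. This is where the summability $\sum_j 2^{jd} g(j) < \infty$ and the polynomial volume growth (\ref{doubl2}) are used in an essential way, and getting the constant to come out as stated in (\ref{L1}) (in particular the clean dependence $(1+\delta)^d$ and the interplay of the exponents $q_0$ and $1 - q_0$) requires tracking each estimate quantitatively. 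Once the weak $(1,1)$ bound is established, the final sentence — boundedness on $L^p$ for $p \in (1, p_0)$ — follows by interpolating the weak $(1,1)$ estimate with the $L^{p_0}$ boundedness of $T$ via Marcinkiewicz.
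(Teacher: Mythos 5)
Your overall architecture coincides with the paper's: Calder\'on--Zygmund decomposition at height $\alpha$, the good part via $L^{p_0}$-boundedness of $T$ and Chebyshev, the bad part split as $Tb_i=(T-SA_{r_i})b_i+SA_{r_i}b_i$ with the first piece handled by \eqref{DMop} outside the dilated balls, the mixed constant obtained by normalizing by $\|T\|_{p_0\to p_0}$, and Marcinkiewicz interpolation at the end. However, the step you yourself flag as the main obstacle --- passing from the per-ball bound $\|A_{r_i}b_i\|_{q_0}\le C\alpha\,\mu(B_i)^{1/q_0}$ to the global bound $\|\sum_i A_{r_i}b_i\|_{q_0}^{q_0}\le C\alpha^{q_0-1}\|f\|_1$ --- is a genuine gap, and the mechanism you propose (``a standard $\ell^{q_0}$-orthogonality-type estimate over $i$'' leveraging the bounded overlap of the $B_i$) does not work as described. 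The functions $A_{r_i}b_i$ are not supported in the $B_i$ or in fixed dilates of them: by \eqref{au2} they spread over all the annuli $C_j(x_i,r_i)$, and for fixed $j\ge 2$ the sets $C_j(x_i,r_i)$ do \emph{not} have overlap bounded uniformly in $j$ (the Calder\'on--Zygmund overlap property applies only to the balls $B_i$ themselves). So there is no almost-disjointness of supports to exploit, and the naive triangle inequality goes the wrong way since $\sum_i\mu(B_i)^{1/q_0}\ge(\sum_i\mu(B_i))^{1/q_0}$.

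The paper closes this gap (following Blunck--Kunstmann and Auscher) by duality rather than orthogonality: test $\sum_i A_{r_i}b_i$ against $u\in L^{q_0'}$ with $\|u\|_{q_0'}=1$, use H\"older on each annulus together with \eqref{au2} to get
$\int_{C_{i,j}}|A_{r_i}b_i|\,|u|\le C\alpha\,2^{jd}g(j)\,v(x_i,r_i)\,\bigl(\mathcal{M}(|u|^{q_0'})(y)\bigr)^{1/q_0'}$
for every $y\in B_i$ (here $\mathcal{M}$ is the Hardy--Littlewood maximal operator), average over $y\in B_i$, sum over $j$ using $\sum_j 2^{jd}g(j)<\infty$, and only then sum over $i$ using the bounded overlap of the $B_i$; the resulting integral $\int\one_{\cup_iB_i}\,(\mathcal{M}(|u|^{q_0'}))^{1/q_0'}$ is controlled by $(\sum_i v(x_i,r_i))^{1/q_0}$ via the weak type $(1,1)$ of $\mathcal{M}$ and a Kolmogorov-type inequality. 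The point is that the maximal function relocalizes the global mass of $A_{r_i}b_i$ back onto $B_i$, which is exactly what your sketch is missing. With this substitution your argument becomes the paper's proof; the remaining discrepancies (mean-zero of the $b_i$, which is never used, and the normalization trick for the constant in \eqref{L1}) are cosmetic.
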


\begin{remark} \label{rpartmult202}
Let $p_0,q_0 \in (1,\infty)$, 
$T \in \mathcal{L}(L^{p_0}(X))$ and for all $t > 0$ let 
$S,A_t \in \mathcal{L}(L^{q_0}(X))$.
Suppose that 
$T$ and $S \, A_t$ have a kernel $K$ and $K_t$, respectively.
Let $\delta,W > 0$ and assume that 
\begin{equation}\label{DM}
\int_{\rho(x,y) \ge \delta t} | K(x,y) - K_t(x,y) | \, d\mu(x) \le W < \infty,
\end{equation} 
for all $t > 0$ and $y \in X$. 
Fix now $x \in X$, $t > 0$ and  $u \in L^1(X) \cap L^\infty(X)$ supported  in the ball $B(x, t)$.  
Then  
\begin{eqnarray*}
&&\hspace{-2cm}\int_{X \setminus B(x,(1+\delta)t )} | (T  - SA_t) u(y) | \, d\mu(y) \\
 &=& \int_{X \setminus B(x,(1+\delta)t )} | \int_{B(x,t)} \Big( K(y,z) -K_t(y,z) \Big)  u(z) \, d\mu(z) | \, d\mu(y)\\
&\le&  \int_{X }  \int_{ \rho(y,z) \ge \delta t} | K(y,z) -K_t(y,z) |  \, d\mu(y) \, |u(z)| \, d\mu(z)\\
&\le& W \| u \|_1.
\end{eqnarray*}
Thus, (\ref{DMop}) is satisfied. The  condition \eqref{DM} is the direct analogue of the 
condition in Duong--McIntosh \cite{DuM3}. 

We also observe that  one does not  need  kernels   for both operators $T$ and $TA_t$ but a kernel $H_t(x,y)$ for the difference 
$T- SA_t$.  We may then replace $K(x,y) -K_t(x,y)$ in  (\ref{DM}) by $H_t(x,y)$. On the other hand the local estimate (\ref{DMop}) which does not appeal 
to kernels may have   advantage of avoiding measurability questions with respect to $x$ and $y$ of the expected singular kernels. 
\end{remark}

\begin{proof} 
As mentioned before, the arguments are similar to the arguments  used in \cite{DuM3} and \cite{Aus2}.
 We give the details for 
 convenience.
Recall we denote by $C$ all inessential constants.
 
 We begin by the classical Calder\'on--Zygmund decomposition.
There exist $c,N > 0$ such that the following is valid.
Fix $f \in L^1(X) \cap L^\infty(X)$ and $\alpha > \frac{\| f \|_1}{\mu(X)}$. 
 There exist  $g,b_1,b_2,\ldots \in L^1(X) \cap L^\infty(X)$ such that 
\[
f = g + b = g + \sum_i b_i
\]
 and
\begin{itemize}
\item[(i)]  $| g(x) | \le c \alpha$ for a.e.\  $ x \in X$,
\item[(ii)] each $b_i$ is supported in a ball $B_i = B(x_i, r_i)$ 
and $\frac{\| b_i \|_1}{v(x_i,r_i)}  \le c \alpha $,
\item[(iii)] $\sum_i v(x_i, r_i) \le c \frac{\| f \|_1}{\alpha}$, and,
\item[(iv)] there exists a constant $N$ such that $\sum_i \one_{B_i}(x) \le N$ for a.e.\  $x \in X$. 
\end{itemize}
 See Section~III.2 in \cite{CW}.  
 
 We proceed in several steps.

\firststep
Using the boundedness of $T$ on $L^{p_0}$ we have
 \begin{eqnarray*}
 \mu(\{ x \in X :  | (Tg)(x) | > \alpha \} ) 
&\le&\frac{ \| T \|_{p_0 \to p_0}^{p_0}}{\alpha^{p_0}}  \| g \|_{p_0}^{p_0}\\
 &\le& C \alpha^{p_0 -1} \frac{ \| T \|_{p_0 \to p_0}^{p_0}}{\alpha^{p_0}}  \| g \|_1\\
 &\le& C  \frac{ \| T \|_{p_0 \to p_0}^{p_0}}{\alpha}   \|  g \|_1. 
 \end{eqnarray*}
 It follows from (ii) and (iii) above that $\| b \|_1 \le c \| f \|_1$ and hence 
$\| g \|_1 \le (1 + c) \| f \|_1$. 
 Therefore, 
 \begin{equation}\label{eq1}
 \mu(\{ x \in X : | (Tg)(x) | > \alpha \} ) \le C  \frac{ \| T \|_{p_0 \to p_0}^{p_0}}{\alpha}  \| f \|_1.
 \end{equation}

\nextstep
We shall prove that 
 \begin{equation}\label{eq2}
 \| \sum_i A_{r_i} b_i \|_{q_0} \le C  \alpha^{1- \frac{1}{q_0}} \| f \|_1^{1/{q_0}}.
 \end{equation}
 We follow similar arguments as in \cite{Aus2}.
Fix  $u \in L^{q_0'}$ with $\| u \|_{q_0'} = 1$, where $q_0'$ is the dual exponent of $q_0$.
Let $i,j \in \Ni$ and set  $C_{i,j} := C_j(x_i,r_i)$. 
Then 
 \begin{eqnarray*}
 \int_{C_{i,j}} | A_{r_i} b_i | \, | u | 
&\le& \Big(  \int_{C_{i,j}} | A_{r_i} b_i |^{q_0} \Big)^{1/{q_0}} 
    \Big( \int_{C_{i,j}} |u |^{q_0'} \Big)^{1/{q_0'}}\\
 &\le& g(j) \frac{v(x_i, 2^{j+1}r_i)^{1/{q_0}}}{v(x_i,r_i)} \Big( \int | b_i | \Big) 
     \Big( \int_{C_{i,j}} |u |^{q_0'} \Big)^{1/{q_0'}}\\
 &\le & c \alpha  g(j) v(x_i, 2^{j+1} r_i)  
    \Big(  \frac{1}{v(x_i, 2^{j+1}r_i)} \int_{C_{i,j}} |u |^{q_0'} \Big)^{1/{q_0'}}, 
 \end{eqnarray*}
 where we have used (\ref{au2}) and property (ii) in the Calder\'on--Zygmund decomposition. 
 Denote by ${\mathcal M}$ the Hardy--Littlewood maximal operator.
Then 
 $$  \frac{1}{v(x_i, 2^{j+1}r_i)} \int_{C_{i,j}} |u |^{q_0'} \le {\mathcal M}(| u |^{q_0'})(y)$$
for all $y \in B_i$.
Combining the previous inequalities and using the doubling condition (\ref{doubl2})
one estimates
$$ \int_{C_{i,j}} | A_{r_i} b_i | \, | u | 
\le C \alpha  2^{j d} g(j) v(x_i,r_i)  \left( {\mathcal M}(| u |^{q_0'})(y) \right)^{1/{q_0'}}.$$
Taking the integral over $y \in B_i$ gives 
$$  \int_{C_{i,j}} | A_{r_i} b_i | \, | u | 
\le C \alpha  2^{j d} g(j) \int_{B_i}  \left( {\mathcal M}(| u |^{q_0'})(y) \right)^{1/{q_0'}}d\mu(y).$$ 
 We  sum over $j $, $i$ and use  $\sum_j 2^{jd} g(j) < \infty$ 
together with property (iv) in the Calder\'on--Zygmund decomposition to  obtain
 \begin{eqnarray*}
 \int_X  |\sum_i A_{r_i} b_i | \, |u | 
&\le&  C \alpha  \int_X  \one_{\cup_i B_i}(y)   \left( {\mathcal M}(| u |^{q_0'})(y) \right)^{1/{q_0'}} d\mu(y)\\
& \le& C \alpha \|  \one_{\cup_i B_i} \|_{q_0}  \, \Big\|  
   \left( {\mathcal M}(| u |^{q_0'}) \right)^{1/{q_0'}} \Big\|_{q_0',w}\\
 &\le& C \alpha \Big(\sum_i v(x_i,r_i) \Big)^{1/{q_0}} \, \|  | u |^{q_0'} \|_1^{1/q_0'}.
 \end{eqnarray*}
 Note that we have used the fact that ${\mathcal M}$ is weak type $(1,1)$ to obtain the last inequality.
Using now (iii) of the Calder\'on--Zygmund decomposition and 
 $\| u \|_{q_0'} = 1$, we obtain (\ref{eq2}). 
 
 By assumption, $S$ is bounded on $L^{q_0}$.
Hence
 $$ \mu( \{ x \in X : | (S\sum_i A_{r_i} b_i) (x) | > \alpha \} ) 
\le \frac{1}{\alpha^{q_0}} \| S \|_{q_0 \to q_0}^{q_0} 
 \| \sum_i A_{r_i} b_i \|_{q_0}^{q_0}.$$
 Now we use (\ref{eq2})  to obtain
 \begin{equation}\label{eq3}
 \mu( \{ x \in X : | (S\sum_i A_{r_i} b_i) (x) | > \alpha \} ) 
\le \frac{C}{\alpha} \| S \|_{q_0 \to q_0}^{q_0} \| f \|_1.
 \end{equation}

\nextstep
Let $\delta$ be as in (\ref{DMop}) and for all $i \in \Ni$ 
set $Q_i := B(x_i, (1+ \delta) r_i)$, the ball of centre $x_i$ and radius $(1+ \delta)r_i$.
Then 
\begin{eqnarray*}
\lefteqn{
\mu(\{ x \in X : | \sum_i (T -SA_{r_i})b_i (x) | > \alpha \})
} \hspace{10mm} \\
 &\le& \sum_i \mu(Q_i) 
   +  \mu(\{ x \in X\setminus \bigcup_j Q_j : | \sum_i ((T -SA_{r_i})b_i) (x) | > \alpha \}) \\
 &\le& C (1+ \delta)^d  \sum_i v(x_i,r_i) 
   + \frac{1}{\alpha} \int_{X\setminus \bigcup_j Q_j} | \sum_i ((T - SA_{r_i})b_i) (x)| \, d\mu(x) \\
 &\le& \frac{C (1+ \delta)^d}{\alpha} \| f \|_1 
+  \frac{1}{\alpha} \sum_i \int_{ X\setminus Q_i}  | ((T - SA_{r_i})b_i) (x)| \, d\mu(x) \\
 &\le& \frac{C (1+ \delta)^d}{\alpha} \| f \|_1 + \frac{W }{\alpha} \sum_i \int |b_i(y)| \, d\mu(y) \\
& \le &  \frac{C (1+ \delta)^d (1+ W)}{\alpha} \| f \|_1.
 \end{eqnarray*}
 Note that the penultimate inequality follows from assumption (\ref{DMop}) and the last one from 
properties (ii) and (iii) in the Calder\'on--Zygmund decomposition.  Hence
  \begin{equation}\label{eq4}
   \mu(\{ x \in X : | \sum_i ((T -SA_{r_i})b_i) (x) | > \alpha \}) 
\le \frac{C (1+\delta)^d (1+ W)}{\alpha} \| f \|_1.
   \end{equation}

\nextstep
It follows from  (\ref{eq1}) that 
   \begin{eqnarray*}
\lefteqn{
\mu(\{ x \in X : | (Tf)(x) | > \alpha \}) 
} \hspace{10mm} \\*
     &\le&  \mu(\{ x \in X : | (Tg)(x) | > \frac{\alpha}{2} \}) +  \mu(\{ x \in X : | (Tb)(x) | > \frac{\alpha}{2} \})\\
    &\le& C  \frac{ \| T \|_{p_0 \to p_0}^{p_0}}{\alpha}  \| f \|_1 + \mu(\{ x \in X : | (Tb)(x) | > \frac{\alpha}{2} \}).
    \end{eqnarray*}
For the second term we use (\ref{eq3}) and  (\ref{eq4}) to estimate
{\allowdisplaybreaks
\begin{eqnarray*} 
\lefteqn{
\mu(\{ x \in X : | (Tb)(x) | > \frac{\alpha}{2} \})
} \hspace{5mm} \\*
&= & \mu(\{ x \in X : 
   | \sum_i (SA_{r_i} b_i)(x) + \sum_i ((T - SA_{r_i} ) b_i)(x)   | > \frac{\alpha}{2} \})\\
&\le&  \mu( \{ x \in X : | (S\sum_i A_{r_i} b_i) (x) | > \frac{\alpha}{4} \} ) \\*
&&\hspace{2mm} {} + \mu( \{ x \in X : | \sum_i ((T - SA_{r_i} ) b_i)(x)   | > \frac{\alpha}{4} \} )\\
   &\le&  \frac{C (1+\delta)^d}{\alpha} \left( \| S \|_{q_0 \to q_0}^{q_0} + (1+ W) \right)  \| f \|_1.
\end{eqnarray*}
}
 We then conclude that $T$ is of weak type $(1,1)$ with a weak type estimate
 \begin{equation}\label{F1} 
 \| T \|_{L^1 \to L^{1,w}} 
\le C (1 + \delta)^d \left( 1+ W + \| T \|_{p_0 \to p_0}^{p_0} + \| S \|_{q_0 \to q_0}^{q_0}  \right).
\end{equation}
If we  replace $T$ and $S$ by $\| T \|_{p_0 \to p_0}^{-1} T$  and $\| T \|_{p_0 \to p_0}^{-1} S$ we  obtain (\ref{DMop}) with
 $\| T \|_{p_0 \to p_0}^{-1} W$ instead of $W$. Thus applying (\ref{F1}) to $\| T \|_{p_0 \to p_0}^{-1} T$, $\| T \|_{p_0 \to p_0}^{-1} S$
 and $\| T \|_{p_0 \to p_0}^{-1} W$ yields (\ref{L1}).
 
 Finally, by Marcinkiewicz interpolation theorem the operator $T$ extends to a bounded operator from $L^p(X) \cap L^{p_0}(X)$ to $L^p(X)$ for all
 $p \in (1, p_0)$. 
\end{proof}

 Following again an idea in  \cite{DuM3} we can prove a version of  the previous theorem  on arbitrary domains.
Let $\Omega$ be an open subset of $X$ and assume that $T$ is bounded
 on $L^{p_0}(\Om)$ and $S$ and  $A_t$ are bounded on $L^{q_0}(\Om)$.
We define $\widetilde T \colon L^{p_0}(X) \to L^{p_0}(X)$ by 
 $$\widetilde{T} f = \one_\Om T (\one_\Om f)$$
 and similarly for $\widetilde{S}$ and $\widetilde{A_t}$.
If $A_t$ satisfies (\ref{au2Om})  below then $\widetilde{A_t}$ satisfies 
 (\ref{au2}).
The operator 
 $T$ is weak type $(1,1)$ {\it if and only if} $\widetilde{T}$ is weak type $(1,1)$. 
Applying the previous theorem to $\widetilde{T}$, $\widetilde{S}$ and $\widetilde{A_t}$ gives 
the following result.

\begin{theorem}\label{th2}
Let  $T$ be  a non-zero bounded linear operator on $L^{p_0}(\Omega)$ for some $p_0 \in (1, \infty)$.
 Suppose there exists a bounded operator $S$ on $L^{q_0}(\Omega)$ for some $q_0 \in (1, \infty)$, 
 a family of
 bounded operators $(A_t)_{t > 0}$ on $L^{q_0}(\Omega)$ 
and a sequence $(g(j))_{j \in \Ni}$ in $\Ri$ 
such that 
\begin{equation}\label{au2Om}
 \left(\frac{1}{v(x,2^{j+1}t)}\int_{C_j(x,t) \cap \Omega} | A_{t}f |^{q_0} \right)^{1/q_0}
\le g(j) \frac{1}{v(x,t)}\int_{B(x,t) \cap \Omega} | f |
\end{equation}
for  all ball $x \in \Omega$, $t > 0$, $j \in \Ni$ and  $f \in L^{q_0}(B(x,t) \cap \Omega)$,
and  $\sum_{j=1}^\infty 2^{jd}g(j) < \infty$. 
Finally, suppose there exist $\delta,W > 0$ such that  
\begin{equation}\label{DM1op}
\int_{\Om \setminus B(x,(1+\delta)t )} | ((T  - SA_t) u)(y) | \, d\mu(y) \le W \| u \|_1
\end{equation} 
for all $x \in X$, $t > 0$ and  $u \in L^1(\Om) \cap L^\infty(\Om)$ supported  in the ball 
$B(x, t) \cap \Om$.  
Then $T$ is a weak type $(1,1)$ operator with
\begin{equation}\label{L1-om} 
\| T \|_{L^1(\Omega) \to L^{1,w}(\Omega)} 
\le C (1+\delta)^d \left( W + \| T \|_{p_0 \to p_0} 
   + \| S \|_{q_0 \to q_0}^{q_0}  \| T \|_{p_0 \to p_0}^{1-q_0} \right).
\end{equation}
Here $C$ is a constant depending only on the constants in {\rm (\ref{doubl2})}.
In particular, $T$ extends to a bounded operator on $L^p(\Omega)$ for all $p \in (1, p_0)$.
\end{theorem}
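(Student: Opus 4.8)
The plan is to reduce Theorem~\ref{th2} to Theorem~\ref{th1} by the standard trivial-extension trick already sketched in the paragraph preceding the statement. The main idea is that weak type $(1,1)$ of an operator on $L^p(\Om)$ is equivalent to weak type $(1,1)$ of its zero-extension to all of $X$, so nothing is lost by moving to the ambient space of homogeneous type, where Theorem~\ref{th1} applies directly.

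First I would set up the extension operators precisely. Given $T \in \mathcal{L}(L^{p_0}(\Om))$, $S, A_t \in \mathcal{L}(L^{q_0}(\Om))$, define $\widetilde{T} f := \one_\Om \, T(\one_\Om f)$ as an operator on $L^{p_0}(X)$, and likewise $\widetilde{S}, \widetilde{A_t}$ on $L^{q_0}(X)$. Since multiplication by $\one_\Om$ is a contraction on every $L^r$, these are bounded with norms no larger than those of $T, S, A_t$; conversely restricting to functions supported in $\Om$ shows the norms coincide, so in particular $\widetilde{T}$ is non-zero. Next I would verify that $\widetilde{T}$ is weak type $(1,1)$ on $X$ if and only if $T$ is weak type $(1,1)$ on $\Om$: for $f \in L^1(\Om)$, extending by zero gives $\widetilde{T}f = \one_\Om T f$, so $\mu(\{x \in X : |\widetilde{T}f(x)| > \alpha\}) = \mu(\{x \in \Om : |Tf(x)| > \alpha\})$ and the $L^1$ norms agree; the reverse direction is equally immediate. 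I would also record that since $\Om$ is open in $X$, measurable subsets of $\Om$ are measurable in $X$ and the measures agree, so no subtlety arises.

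The substantive step is checking that the hypotheses of Theorem~\ref{th1} hold for $\widetilde{T}, \widetilde{S}, \widetilde{A_t}$ with the same sequence $(g(j))$ and the same $\delta, W$. For the off-diagonal bound \eqref{au2}: fix $x \in X$, $t > 0$, $j \in \Ni$ and $f \in L^{q_0}(B(x,t))$. Then $\widetilde{A_t} f = \one_\Om A_t(\one_\Om f)$ where $\one_\Om f \in L^{q_0}(B(x,t) \cap \Om)$, so $\int_{C_j(x,t)} |\widetilde{A_t} f|^{q_0} = \int_{C_j(x,t) \cap \Om} |A_t(\one_\Om f)|^{q_0}$, and \eqref{au2Om} bounds this by $g(j)^{q_0} v(x,2^{j+1}t) \big(v(x,t)^{-1} \int_{B(x,t)\cap\Om} |f|\big)^{q_0} \le g(j)^{q_0} v(x,2^{j+1}t)\big(v(x,t)^{-1}\int_{B(x,t)} |f|\big)^{q_0}$, which is exactly \eqref{au2}; the summability $\sum_j 2^{jd} g(j) < \infty$ is unchanged. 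For the oscillation bound \eqref{DMop}: take $x \in X$, $t > 0$ and $u \in L^1(X) \cap L^\infty(X)$ supported in $B(x,t)$. Write $(\widetilde{T} - \widetilde{S}\,\widetilde{A_t}) u = \one_\Om(T - S A_t)(\one_\Om u)$, using $\widetilde{S}\,\widetilde{A_t} u = \one_\Om S(\one_\Om \one_\Om A_t(\one_\Om u)) = \one_\Om S A_t(\one_\Om u)$. Now $\one_\Om u \in L^1(\Om) \cap L^\infty(\Om)$ is supported in $B(x,t) \cap \Om$, so \eqref{DM1op} gives $\int_{\Om \setminus B(x,(1+\delta)t)} |(T - SA_t)(\one_\Om u)(y)|\,d\mu(y) \le W\|\one_\Om u\|_1 \le W\|u\|_1$, and since the integrand of the left side of \eqref{DMop} is supported in $\Om$, the $X$-integral equals the $\Om$-integral, yielding \eqref{DMop}.

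Finally I would invoke Theorem~\ref{th1} for $\widetilde{T}$ to conclude $\|\widetilde{T}\|_{L^1(X) \to L^{1,w}(X)} \le C(1+\delta)^d\big(W + \|\widetilde{T}\|_{p_0\to p_0} + \|\widetilde{S}\|_{q_0\to q_0}^{q_0}\|\widetilde{T}\|_{p_0\to p_0}^{1-q_0}\big)$, and then translate back using the norm identities $\|\widetilde{T}\|_{L^1(X)\to L^{1,w}(X)} = \|T\|_{L^1(\Om)\to L^{1,w}(\Om)}$, $\|\widetilde{T}\|_{p_0\to p_0} = \|T\|_{p_0\to p_0}$, $\|\widetilde{S}\|_{q_0\to q_0} = \|S\|_{q_0\to q_0}$ to obtain \eqref{L1-om}. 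The $L^p$ boundedness for $p \in (1,p_0)$ follows from Marcinkiewicz interpolation together with the $L^{p_0}$ boundedness, exactly as in the proof of Theorem~\ref{th1}. I do not anticipate a genuine obstacle here — the only point requiring a little care is the bookkeeping that the extension commutes with the operator algebra in the right way (in particular that $\widetilde{S}\widetilde{A_t} = \widetilde{SA_t}$, which uses $\one_\Om^2 = \one_\Om$) and that all integrals over $X$ of functions supported in $\Om$ reduce to integrals over $\Om$; these are routine but should be stated so the reduction is airtight.
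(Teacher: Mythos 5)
Your proposal is correct and is essentially the paper's own argument: the authors prove Theorem~\ref{th2} precisely by passing to the zero-extensions $\widetilde{T}f = \one_\Om T(\one_\Om f)$, $\widetilde{S}$, $\widetilde{A_t}$, observing that \eqref{au2Om} implies \eqref{au2} and that $T$ is weak type $(1,1)$ iff $\widetilde{T}$ is, and then applying Theorem~\ref{th1}. You have merely written out the routine verifications (in particular $\widetilde{S}\widetilde{A_t} = \widetilde{SA_t}$ and the norm identities) that the paper leaves implicit.
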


As in Remark~\ref{rpartmult202} the condition (\ref{DM1op}) follows if 
the operators $T$ and $SA_t$ are given by kernels $K$ and $K_t$ 
{\rm (}in the sense of {\rm (\ref{kernel}))} and there are $\delta,W > 0$
such that 
\begin{equation}\label{DM1}
\int_{\rho(x,y) \ge \delta t} | K(x,y) - K_t(x,y) | \, d\mu(x) \le W < \infty,
\end{equation} 
for all $t > 0$ and $y \in \Omega$. It suffices to note that the associated  kernel of $\widetilde{T}$ is the  extension by $0$ outside $\Om \times \Om$
 of the kernel of $T$ where $\widetilde{T} f = \one_\Om T (\one_\Om f)$ as above. Similarly for the kernel of $\widetilde{SA_t}$.

We may replace in the previous theorems the annulus
$C_j(x,r)$ by the annulus $A(x,j,r) := B(x,(j+1)r) \setminus B(x,jr)$.
In that case, $v(x, 2^{j+1}r)$ has to be replaced by
$v(x, (j+1)r)$ and the condition on $g$ becomes  $\sum_j  j^d g(j) < \infty$. 

Following   \cite{BK1} it is proved in   \cite{Aus2}   that a bounded operator $T$ on $L^2(X)$
is of weak type $(r,r)$ if 
\begin{equation}\label{au1}
\Big(\frac{1}{v(x, 2^{j+1}t)}\int_{C_j(x,t)} | T(I-A_{t})f |^2\Big)^{1/2}
\le g(j) \Big(\frac{1}{v(x,t)}\int_{B(x,t)} | f |^r\Big)^{1/r}
\end{equation}
and
\begin{equation}\label{au2'}
 \Big(\frac{1}{v(x,2^{j+1}t)}\int_{C_j(x,t)} |A_{t}f |^2\Big)^{1/2}
\le g(j) \Big(\frac{1}{v(x,t)}\int_{B(x,t)} | f |^r\Big)^{1/r}
\end{equation}
for all $x \in X$, $t > 0$, $j \in \Ni$ and $f \in L^2$ supported in $B(x,t)$,
and $\sum g(j)2^{dj}<\infty$.
One can prove a version of this result in which  $T-TA_t$ in (\ref{au1}) is replaced by 
$T-SA_t$ as in Theorem \ref{th1}.
We do not write the details here since we have no concrete application.
 Theorem \ref{th1} is suitable for our purpose. 

Finally, let us mention that a  Gaussian upper bound implies assumption  (\ref{au2}).
Indeed, assume that $A_t$ is given by a kernel $a_t$ such that
$$ | a_t(x,y) | \le \frac{C}{v(y,t^{1/m}) } \exp\{ - c \frac{\rho(x,y)^{m/(m-1)}}{t^{1/(m-1)}} \}$$
for all $t > 0$ and $x,y \in X$.
Here $m\ge 2$ and $C, c$ are two positive constants.
Fix $x_0 \in X$.
Note that the operator $\one_{C_j(x_0,t)} A_{t^m} \one_{B(x_0,t)}$ has the 
kernel
$(x,y) \mapsto \one_{C_j(x_0,t)}(x) a_{t^m}(x,y) \one_{B(x_0,t)}(y)$.
But 
\begin{eqnarray*}
\lefteqn{ |\one_{C_j(x_0,t)}(x) a_{t^m}(x,y) \one_{B(x_0,t)}(y)| 
} \hspace{30mm} \nonumber  \\*
& \leq & \frac{C}{v(y,t) }  \one_{C_j(x_0,t)}(x) \one_{B(x_0,t)}(y) 
      \exp\{ - c \frac{\rho(x,y)^{m/(m-1)}}{t^{m/(m-1)}}\}  \\
& \leq & \frac{C}{v(y,t) }  e^{-c 2^{j m/(m-1)}}  \one_{B(x_0,t)}(y)  \\
& \leq & \frac{C}{v(x_0,t) }  e^{-c 2^{j m/(m-1)}} ,
\end{eqnarray*}
where we used the doubling property in the last step.
This an $L^1$-$L^\infty$ estimate.
By interpolation, it  implies an $L^1$-$L^{q_0}$ estimate which gives  (\ref{au2}) 
for every $q_0 \in (1, \infty)$.

\section{A partial multiplier theorem for degenerate operators}

Let the coefficients $a_{kj}$, form $\fra_0$ and the self-adjoint operator $A$ 
associated with $\fra_0$ be as in the introduction.
For every bounded measurable function $F \colon  [0, \infty) \to \CC$,  
the operator  $F(A)$ is well defined by spectral theory and is bounded on $L^2(\RR^d)$. 
As  mentioned in the introduction, if $A$ is uniformly  elliptic  then $F(A)$ extends 
to a bounded operator on $L^p(\RR^d)$ for all $p \in (1, \infty)$ provided $F$ has a finite number 
of derivatives on $[0, \infty)$ which have good decay.
We address here the same problem for degenerate operators.
This is a difficult problem because no global Gaussian upper 
bounds are available for  $A$ in general.
We prove a partial result by projecting on the part where the  matrix $(a_{kj})$ is uniformly elliptic.
There are two versions.

 \begin{theorem}  \label{th3}
  Let $\Omega \subset \RR^d$ be an open bounded set with Lipschitz 
boundary.
Suppose there exists a $\mu > 0$ such that $(a_{kj}(x)) \geq \mu I$ for almost every
$x \in \Omega$ and denote by $P_\Omega$  the projection from $L^2(\RR^d)$ onto 
$L^2(\Omega)$.
 Set $H = A + I$. 
 Let $F\colon  [0, \infty) \to \CC$ be a bounded function such that 
 \begin{equation}\label{eq2.1-om}
 \sup_{t > 0} \| \varphi (.) F(t.) \|_{C^s} < \infty 
 \end{equation}
 for some $s > d/2$ and some non-trivial function  $\varphi \in C_{c}^\infty(0, 
 \infty)$.
 Then  $P_\Om F(H) P_\Om$  is of   weak type $(1,1)$   and 
 extends  to a  bounded operator  on $L^p(\RR^d)$ for all $p \in (1, \infty).$
 \end{theorem}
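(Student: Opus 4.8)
The plan is to apply Theorem~\ref{th2} on the space of homogeneous type $X = \RR^d$ with $\Omega$ the given Lipschitz domain, $p_0 = q_0 = 2$, $T = P_\Om F(H) P_\Om$, $S = P_\Om F(H)$ and $A_t = e^{-t^2 H} P_\Om$ (using the square of the time parameter so that the off-diagonal scale matches $\rho(x,y) \sim t$). The operator $T$ is bounded on $L^2$ by the spectral theorem, and $S$ is bounded on $L^2$ for the same reason, so the structural hypotheses of Theorem~\ref{th2} are in place as soon as we verify the two quantitative conditions (\ref{au2Om}) and (\ref{DM1op}). Once weak type $(1,1)$ is established, boundedness on $L^p$ for $p \in (1,2)$ follows from the last sentence of Theorem~\ref{th2}; boundedness on $L^2$ is trivial; and the range $p \in (2,\infty)$ follows by duality, since the adjoint of $P_\Om F(H) P_\Om$ is $P_\Om \overline{F}(H) P_\Om$, and $\overline F$ satisfies the same hypothesis (\ref{eq2.1-om}) as $F$, so it is also weak type $(1,1)$ and hence bounded on $L^p$ for $p \in (1,2)$, giving the original operator on the dual range.

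First I would check the off-diagonal estimate (\ref{au2Om}) for $A_t = e^{-t^2 H} P_\Om$. Here one uses that $P_\Om e^{-tH} P_\Om = P_\Om e^{-t(I+A)} P_\Om$ has a kernel $K_t$ satisfying the Gaussian bound (\ref{I5}) with the extra $(1+t)^{d/2}$ absorbed by the shift $I + A$, as proved in \cite{EO1}; but in fact we only need the one-sided object $e^{-t^2 H} P_\Om$, which satisfies $L^1$–$L^2$ off-diagonal (Davies–Gaffney type) estimates. This can be obtained from the Gaussian bound for the two-sided kernel together with the $L^2$-contractivity of $e^{-t^2 H}$, or directly from finite-speed-of-propagation / Davies' perturbation argument applied to the form $\fra_0$ restricted to $\Om$ where it is elliptic. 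The resulting estimate has the form $\|\one_{C_j(x,t)} e^{-t^2 H} \one_{B(x,t)} f\|_2 \le g(j) v(x,t)^{-1/2}\|f\|_1$ with $g(j) = C e^{-c 4^j}$, and the computation at the end of Section~2 (Gaussian bound $\Rightarrow$ (\ref{au2})) applies almost verbatim; in particular $\sum_j 2^{jd} g(j) < \infty$.

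The main obstacle is the oscillation estimate (\ref{DM1op}), i.e.\ controlling
\[
\int_{\Om \setminus B(x,(1+\delta)t)} \big| \big(T - S A_{t}\big) u(y)\big| \, dy
  = \int_{\Om \setminus B(x,(1+\delta)t)} \big| P_\Om F(H)\big(I - e^{-t^2 H}\big) P_\Om u(y)\big| \, dy
\]
for $u$ supported in $B(x,t) \cap \Om$. The key point, and the reason for choosing $S = P_\Om F(H)$ rather than $S = T$, is that the inner operator is now exactly $F(H)(I - e^{-t^2H})$, a genuine regularization of the multiplier, not something with a spurious $P_\Om$ sandwiched in the middle. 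The plan is to decompose $F$ dyadically using the hypothesis (\ref{eq2.1-om}): write $F(H) = \sum_{n \in \ZZ} F(H)\psi(2^{-n}H)$ for a suitable dyadic partition of unity $\psi \in C_c^\infty(0,\infty)$, and for each piece use that $\varphi(\cdot)F(2^n \cdot)$ has $C^s$ norm bounded uniformly in $n$ with $s > d/2$ to get, via the finite-propagation-speed property of the wave operator $\cos(r\sqrt H)$ associated with $P_\Om H P_\Om$ (valid because the form is elliptic on $\Om$) and the Gaussian bound (\ref{I5}), $L^1 \to L^1$ bounds on the off-diagonal pieces of $F(H)\psi(2^{-n}H)$ that decay in the ratio of $|x-y|$ to the scale $2^{-n/2}$, together with the factor $(1 - e^{-t^2 2^n})$ which is small when $2^n t^2 \ll 1$. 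Summing the two regimes $2^n t^2 \lesssim 1$ and $2^n t^2 \gtrsim 1$ — in the first the factor $(1 - e^{-t^2 2^n}) \lesssim t^2 2^n$ gives convergence, in the second the off-diagonal decay over the region $|x-y| \ge (1+\delta)t \gg 2^{-n/2}$ gives convergence — yields a bound by $W\|u\|_1$ with $W$ depending only on $\sup_t \|\varphi(\cdot)F(t\cdot)\|_{C^s}$, $s$, $d$, $\mu$ and $\Om$. This is the standard Duong–Ouhabaz–Sikora \cite{DOS} spectral-multiplier machinery, and the only thing to be careful about is that all the kernel bounds one invokes are for the \emph{localized} operators $P_\Om(\cdot)P_\Om$, for which (\ref{I5}) and the associated finite-speed-of-propagation estimates are available precisely because of the ellipticity assumption $(a_{kj}) \ge \mu I$ a.e.\ on $\Om$.
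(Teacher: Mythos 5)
Your proposal follows essentially the same route as the paper: the same choices $T = P_\Om F(H)P_\Om$, $S = P_\Om F(H)$ and $A_t = e^{-t^2H}P_\Om$, Davies--Gaffney ($L^1$--$L^2$ off-diagonal) estimates for $A_t$ via Davies' perturbation method, and the dyadic decomposition $F=\sum_n \varphi(2^{-n}\cdot)F$ with the factor $\min(1,t^2 2^n)$ from $1-e^{-t^2\lambda}$ and the two summation regimes $2^n t^2 \le 1$ and $t2^{n/2}>1$. The only (inessential) difference is technical: for the weighted kernel bounds on each dyadic piece the paper does not use finite propagation speed for $\cos(r\sqrt H)$, but instead writes $F(H_r)=\int\hat g(\xi)e^{-(1-i\xi)H_r}\,d\xi$ with $g(\lambda)=F(\lambda)e^{\lambda}$ and uses complex-time Gaussian bounds for the localized semigroup together with an interpolation argument (Lemma~\ref{le1}).
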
 
 
 \begin{theorem}\label{th4}
  Let $\chi \in C_{\rm b}^\infty(\RR^d)$, $\mu > 0$ and 
suppose that $(a_{kj}(x)) \geq \mu I$ for almost every
$x \in \supp \chi$.
Set $H = A + I$. 
 Let $F\colon  [0, \infty) \to \CC$ be a bounded function such that 
 \begin{equation}\label{eq2.1-F}
 \sup_{t > 0} \| \varphi (.) F(t.) \|_{C^s} < \infty 
 \end{equation}
 for some $s > d/2$ and some non-trivial function  $\varphi \in C_{c}^\infty(0, 
 \infty)$.
 Then $M_\chi F(H)M_\chi $ is  of weak type $(1,1)$   and 
 extends  to a  bounded operator  on $L^p(\RR^d)$ for all $p \in (1, \infty).$
 \end{theorem}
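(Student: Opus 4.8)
The plan is to deduce Theorem~\ref{th4} from the abstract singular-integral criterion of Theorem~\ref{th1}, applied on $X = \RR^d$ with $p_0 = 2$, exactly as suggested in the introduction. First I would record the $L^2$-boundedness of the relevant operators: $T := M_\chi F(H) M_\chi$ is bounded on $L^2(\RR^d)$ by spectral theory since $F$ is bounded, and I take $S := M_\chi F(H)$, which is likewise $L^2$-bounded, and $A_t := e^{-tH} M_\chi$. The factorisation $T A_t = M_\chi F(H) e^{-tH} M_\chi = S A_t$ is the whole point of using the version with $S \ne T$: the heat operator $e^{-tH}$ now sits \emph{next to} the multiplier $F(H)$, so one can regularise $F(H)$ by $e^{-tH}$. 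The off-diagonal hypothesis \eqref{au2} for $(A_t)_{t>0}$ (with $q_0$ any exponent in $(1,2]$, say $q_0 = 2$, after rescaling $t \rightsquigarrow t^2$ to match the parabolic scaling) I would obtain from the Gaussian bound \eqref{I5} for $M_\chi e^{-tH} M_\chi$ proved in \cite{EO1}: since $\chi$ is bounded, $e^{-tH}M_\chi$ inherits an $L^1$--$L^\infty$, hence $L^1$--$L^{q_0}$, off-diagonal Gaussian estimate, and the computation at the end of Section~2 shows such a bound yields \eqref{au2} with $\sum_j 2^{jd} g(j) < \infty$.

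The substantive work is verifying the oscillation estimate \eqref{DMop}, i.e.
\begin{equation*}
\int_{|x-y| \ge (1+\delta)t} \bigl| \bigl( M_\chi F(H) M_\chi - M_\chi F(H) e^{-t^2 H} M_\chi \bigr) u (y) \bigr| \, dy \le W \| u \|_1
\end{equation*}
for $u$ supported in $B(x,t)$. The operator in brackets is $M_\chi F(H)(I - e^{-t^2 H}) M_\chi$. I would proceed by the classical dyadic-annulus decomposition: write the region $\{|x-y| \ge (1+\delta)t\}$ as a union of annuli $C_j(x,t)$, apply Cauchy--Schwarz on each annulus against $v(x,2^{j+1}t)^{1/2}$, and reduce to an $L^1$--$L^2$ weighted norm estimate for $\one_{C_j} M_\chi F(H)(I - e^{-t^2 H}) M_\chi \one_{B(x,t)}$. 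To estimate this I would decompose $F$ dyadically in the spectral variable using the auxiliary function $\varphi$ from \eqref{eq2.1-F}: write $F = \sum_{\ell \in \ZZ} F \, \psi_\ell$ where $\psi_\ell(\lambda) = \varphi(2^{-\ell}\lambda)$ form a partition (suitably normalised) of $(0,\infty)$, so that $F(H) = \sum_\ell F_\ell(H)$ with $F_\ell(\lambda) = F(\lambda)\varphi(2^{-\ell}\lambda)$ satisfying, by \eqref{eq2.1-F}, uniform $C^s$ bounds after rescaling. Finite-propagation-speed / Davies--Gaffney type estimates for the wave operator $\cos(r\sqrt{H})$ associated with $M_\chi e^{-tH}M_\chi$ — or directly the heat-kernel Gaussian bound \eqref{I5} together with the subordination formula — give, for each dyadic piece, $L^2$ off-diagonal decay of $M_\chi F_\ell(H) M_\chi$ of the form $e^{-c(2^j t \sqrt{\cdot})}$; combined with the extra smoothing factor $(I - e^{-t^2H})$ which contributes $\min(1, t^2 2^\ell)$, and with the $C^s$ norm with $s > d/2$ controlling $\sum_\ell 2^{\ell d/2}(\cdots)$, one sums in $\ell$ and then in $j$ to a finite constant $W$ independent of $t$, $x$, $u$. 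This is standard spectral-multiplier machinery (à la \cite{DOS}), the only twist being that all heat/wave operators are flanked by $M_\chi$ and one uses \eqref{I5} in place of \eqref{I2}; the harmless polynomial factor $(1+t)^{d/2}$ is absorbed by working with $H = I + A$ rather than $A$.

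Once Theorem~\ref{th1} applies, $T = M_\chi F(H) M_\chi$ is weak type $(1,1)$. For $p \in (1,2)$ boundedness follows by Marcinkiewicz interpolation with the $L^2$ bound. For $p \in (2,\infty)$ I would pass to adjoints: since the coefficients $a_{kj}$ are real symmetric, $H$ is self-adjoint and $M_\chi$ is self-adjoint, so $(M_\chi F(H) M_\chi)^* = M_\chi \overline{F}(H) M_\chi$, and $\overline F$ satisfies the same hypothesis \eqref{eq2.1-F}; hence $M_\chi \overline F(H) M_\chi$ is bounded on $L^p$ for $p \in (1,2)$, which by duality gives boundedness of $M_\chi F(H) M_\chi$ on $L^p$ for $p \in (2,\infty)$. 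The main obstacle, as indicated, is the oscillation estimate \eqref{DMop}: making the dyadic spectral decomposition interact correctly with the $M_\chi$-truncated Gaussian/finite-speed bounds, and checking that the threshold $s > d/2$ is exactly what is needed for the $\ell$-sum to converge. Everything else is bookkeeping.
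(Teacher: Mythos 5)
Your overall architecture is the paper's: $T = M_\chi F(H) M_\chi$, $S = M_\chi F(H)$, $A_t = e^{-t^2 H} M_\chi$, a dyadic decomposition of $F$ in the spectral variable, weighted $L^2$ kernel estimates for each piece, and convergence of the sums because $s > d/2$. But there is one genuine gap, in your verification of hypothesis \eqref{au2} for $A_t = e^{-t^2H}M_\chi$. You claim that, since $\chi$ is bounded, $e^{-tH}M_\chi$ ``inherits'' an $L^1$--$L^\infty$ off-diagonal Gaussian estimate from the Gaussian bound \eqref{I5} for $M_\chi e^{-tH}M_\chi$. That deduction goes the wrong way: boundedness of $\chi$ lets you pass from a kernel bound for $e^{-tH}M_\chi$ to one for $M_\chi e^{-tH}M_\chi$, not conversely; removing the left cutoff is exactly what cannot be done here. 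The paper stresses (introduction and footnote) that it is \emph{not known} whether $e^{-tH}M_\chi$ satisfies Gaussian upper bounds in general --- this is precisely why Theorem~\ref{th1} is formulated with $L^1$--$L^{q_0}$ off-diagonal estimates rather than kernel bounds. The correct route (Lemma~\ref{le2}) is a Davies--Gaffney argument: conjugate the semigroup by $e^{\rho\psi}$ for Lipschitz $\psi$, use the bound $\| e^{\rho\psi} e^{-tH} e^{-\rho\psi} M_\chi \|_{1\to 2} \le C t^{-d/4} e^{\omega\rho^2 t}$ coming from \cite{EO1}, and optimise in $\rho$ to obtain $\| P_{C_j(x,t)} e^{-t^2H} M_\chi P_{B(x,t)} \|_{1\to 2} \le C t^{-d/2} e^{-c4^j}$, which gives \eqref{au2} with $q_0 = 2$. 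Without this (or an equivalent substitute) the application of Theorem~\ref{th1} is unjustified.

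A secondary imprecision: in the oscillation estimate you assert exponential $L^2$ off-diagonal decay for each dyadic piece $M_\chi F_\ell(H) M_\chi$. For a multiplier that is merely $C^s$ on each dyadic block this is too strong; what is actually proved (Lemma~\ref{le1}, following \cite{DOS}) is a weighted $L^2$ estimate with the \emph{polynomial} weight $(1+\sqrt{r}\,|x-y|)^{s}$, obtained from complex-time Gaussian bounds for $M_\chi e^{-zH}M_\chi$, Fourier inversion of $F_\ell(\lambda)e^{\lambda}$, and interpolation with the trivial $L^1\to L^2$ bound. The polynomial rate is exactly why $s > d/2$ is the threshold, and your final bookkeeping with the smoothing factor $\min(1,t^2 2^\ell)$ and the $\ell$-sum is otherwise the right one. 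Your duality argument for $p\in(2,\infty)$ is correct and fills in a step the paper leaves implicit.
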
 

 The proof of both  theorems is almost the same.
It relies mainly on weighted estimates for the associated kernel of  
$M_\chi F(H)M_\chi $ (or the kernel of $P_\Om F(H) P_\Om$)  together with
 Theorem \ref{th1}.
The proof of weighted estimates for the kernel of $M_\chi F(H)M_\chi $ 
(or  of $P_\Om F(H) P_\Om$) is  based on partial Gaussian bounds proved in 
\cite{EO1} and a similar strategy as in  \cite{DOS} and \cite{Ouh5}. 
 
 In the sequel of this section we assume that there exists a constant
 $\mu > 0$ such that $(a_{kj}(x)) \geq \mu I$ for a.e.\ $x \in \Om$ 
respectively for a.e.\ 
$x \in \supp \chi \cup \supp \widetilde{\chi}$.
In the first case
 $\Om$ is a bounded Lipschitz domain of $\RR^d$ and in the second case 
$\chi , \widetilde{\chi} \in C_{\rm b}^\infty(\RR^d)$.
We   denote by $S_t := e^{-tA}$ the holomorphic 
 semigroup generated by $-A$ on $L^2(\RR^d)$.
 We recall the following result from \cite{EO1}. 

 \begin{theorem}\label{thEO} 
There are $C,c > 0$ such that for all $t > 0$ 
the operator $M_{\widetilde{\chi}}  S_t M_\chi$ 
{\rm (}respectively $P_\Om S_t P_\Om${\rm )} is given by a kernel $p_t$ which satisfies 
 $$ | p_t(x,y) | \le C t^{-d/2} e^{-c \frac{|x-y|^2}{t}} (1 + t)^{d/2} 
\mbox{ for all } t > 0 \mbox{ and }  x, y  \in \RR^d.$$
 \end{theorem}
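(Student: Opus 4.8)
The plan is to prove Theorem~\ref{thEO} by combining two ingredients of very different nature, glued together by Davies' exponential perturbation method; I describe it for $M_{\widetilde\chi}\,S_t\,M_\chi$, the case $P_\Om\,S_t\,P_\Om$ being identical with $\Om$ (and the Lipschitz regularity of $\partial\Om$) in place of $\supp\chi\cup\supp\widetilde\chi$. The first ingredient ignores the degeneracy and uses only $\Lambda:=\|\,(a_{kj})\,\|_\infty<\infty$. For a bounded Lipschitz function $\psi$ with $\|\nabla\psi\|_\infty\le1$ and $\rho>0$, the sesquilinear form $u\mapsto\fra_0(e^{-\rho\psi}u,e^{\rho\psi}u)$ differs from $\fra_0$ by lower-order terms whose coefficients are controlled purely by $\Lambda$; in particular its real part equals $\fra_0(u,u)-\rho^2\sum_{k,j}\int a_{kj}(\partial_j\psi)(\partial_k\psi)|u|^2\ge-\Lambda\rho^2\|u\|_2^2$ because $|\nabla\psi|\le1$, while its imaginary part is absorbed into $\fra_0(u,u)+C\rho^2\|u\|_2^2$ by the Cauchy--Schwarz inequality for the nonnegative form $\fra_0$. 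Hence $A_{\rho\psi}:=e^{\rho\psi}Ae^{-\rho\psi}$ (the operator of this form, or of its regular part if $\fra_0$ is not closable) satisfies $\|e^{-tA_{\rho\psi}}\|_{2\to2}=\|e^{\rho\psi}e^{-tA}e^{-\rho\psi}\|_{2\to2}\le e^{\omega\rho^2 t}$ with $\omega=\omega(\Lambda)$, and choosing $\psi$ to separate Borel sets $E,F$ and optimising $\rho$ gives the Davies--Gaffney bound $\|P_E e^{-tA}P_F\|_{2\to2}\le e^{-\operatorname{dist}(E,F)^2/(ct)}$.

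The second ingredient, which is the crux, uses the hypothesis $(a_{kj})\ge\mu I$ near the supports. I would first establish the on-diagonal bound
\[
\|M_{\widetilde\chi}\,e^{-tA}\,M_\chi\|_{1\to\infty}\le C\,t^{-d/2}(1+t)^{d/2}\qquad(t>0),
\]
which in particular furnishes a bounded kernel $p_t$, Hölder continuous by the parabolic estimates below. The key point is that for $v\in L^2(\RR^d)$ the function $(x,\sigma)\mapsto(e^{-\sigma A}v)(x)$ is a weak solution of $\partial_\sigma w+Aw=0$, and on any parabolic cylinder contained in the region where $(a_{kj})\ge\mu I$ the classical De~Giorgi--Nash--Moser local boundedness and Hölder estimates apply, with constants depending only on $\mu,\Lambda,d$ --- local ellipticity is all that is needed. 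Applying this near $\supp\widetilde\chi$ on a cylinder of spatial size $\min(1,\sqrt t\,)$ and bounding the $L^2$-norm of the solution over the cylinder by $\|v\|_2$ (contractivity of $e^{-\sigma A}$ on $L^2$) yields $\|M_{\widetilde\chi}\,e^{-tA/2}\|_{2\to\infty}\le C\,t^{-d/4}(1+t)^{d/4}$, and likewise $\|M_\chi\,e^{-tA/2}\|_{2\to\infty}\le C\,t^{-d/4}(1+t)^{d/4}$, hence $\|e^{-tA/2}M_\chi\|_{1\to2}\le C\,t^{-d/4}(1+t)^{d/4}$ by duality and self-adjointness. The displayed bound follows from the factorisation $M_{\widetilde\chi}e^{-tA}M_\chi=(M_{\widetilde\chi}e^{-tA/2})(e^{-tA/2}M_\chi)$ through $L^2$: this is exactly what circumvents the absence of any global smoothing of $e^{-tA}$, and the non-decay of $e^{-tA}$ as $t\to\infty$ is what produces the harmless factor $(1+t)^{d/2}$.

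Finally I would upgrade this to the full Gaussian bound by the exponential-weight argument. For $x,y$ with $|x-y|\ge\sqrt t$ (the complementary range being immediate from the previous paragraph) pick a smooth $\psi$, $\|\nabla\psi\|_\infty\le1$, vanishing on $B(y,\sqrt t\,)$ and bounded by $|x-y|$. Since $e^{\pm\rho\psi}$ commutes with multiplication operators, $e^{\rho\psi}M_{\widetilde\chi}e^{-tA}M_\chi e^{-\rho\psi}=M_{\widetilde\chi}\,e^{-tA_{\rho\psi}}\,M_\chi$, and I estimate its $L^1\to L^\infty$ norm by splitting $e^{-tA_{\rho\psi}}$ into three factors $e^{-tA_{\rho\psi}/3}$: the middle one is $\le e^{\omega\rho^2 t/3}$ in $L^2$ by the first paragraph, and the two outer ones are bounded in $L^2\to L^\infty$ resp.\ $L^1\to L^2$ by the \emph{weighted} analogue of the second paragraph --- here it is essential that $A_{\rho\psi}$ has the \emph{same} leading part $(a_{kj})$, hence is still elliptic near the supports, so the parabolic estimate still applies, now at the cost of an extra factor $e^{C(\rho\sqrt t+\rho^2 t)}$ coming from its first- and zeroth-order coefficients (of size $O(\rho)$ and $O(\rho^2)$). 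This gives $\|e^{\rho\psi}M_{\widetilde\chi}e^{-tA}M_\chi e^{-\rho\psi}\|_{1\to\infty}\le C\,e^{C\rho^2 t}\,t^{-d/2}(1+t)^{d/2}$, so for $x',y'$ in small balls about $x,y$ one has $|p_t(x',y')|\le e^{-\rho(\psi(x')-\psi(y'))}\,Ce^{C\rho^2 t}t^{-d/2}(1+t)^{d/2}$; since $\psi(x')-\psi(y')$ is comparable to $|x-y|$, optimising $\rho\sim|x-y|/t$ produces the factor $e^{-c|x-y|^2/t}$, and the Hölder continuity of $p_t$ promotes the almost-everywhere bound to all $x,y$.

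The main obstacle is the second ingredient: recognising that the \emph{local} parabolic regularity theory --- which costs nothing beyond local ellipticity --- together with the $L^2$-factorisation is exactly what replaces the unavailable global ultracontractivity. The chief technical point in the third step is to control the $\rho$-dependence of the local parabolic estimate for $A_{\rho\psi}$, checking that it grows only at the borderline-admissible rate $e^{O(\rho^2 t)}$, since a worse dependence would prevent the optimisation over $\rho$ from closing.
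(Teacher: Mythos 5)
First, a point of fact: the paper does not prove Theorem~\ref{thEO} at all --- it is imported from \cite{EO1} (``We recall the following result from \cite{EO1}''), with only the remark that the arguments there extend to $\chi\neq\widetilde\chi$. So your proposal cannot be matched against an in-paper argument, only against the strategy of \cite{EO1}, which it does resemble in outline: Davies' perturbation for the off-diagonal Gaussian factor, an on-diagonal $L^1$-$L^\infty$ bound exploiting ellipticity only near the supports, and factorisation through $L^2$. Your first and third ingredients (the twisted-form computation giving $\|e^{\rho\psi}e^{-tA}e^{-\rho\psi}\|_{2\to2}\le e^{\omega\rho^2t}$, and the optimisation in $\rho$) are sound.

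The gap is in the second ingredient. You apply interior De Giorgi--Nash--Moser estimates ``on a cylinder of spatial size $\min(1,\sqrt t\,)$'' centred near $\supp\widetilde\chi$, but ellipticity is assumed only a.e.\ on the closed set $\supp\chi\cup\supp\widetilde\chi$, not on a neighbourhood of it of definite size: for $x_0$ close to $\partial\supp\widetilde\chi$ the largest admissible cylinder has radius $d(x_0,\partial\supp\widetilde\chi)$, and the interior sup-bound blows up like a negative power of that radius. For the $M_{\widetilde\chi}$ statement this is repairable --- $\widetilde\chi\in C_{\rm b}^\infty$ vanishes together with all its derivatives on $\partial\supp\widetilde\chi$, hence to infinite order, which absorbs the polynomial blow-up --- but you must say so. For the $P_\Om$ statement, which you dismiss as ``identical,'' the repair fails outright: $\one_\Om=1$ up to $\partial\Om$, interior estimates give nothing at boundary points, the solution satisfies no boundary condition on $\partial\Om$ (outside $\Om$ the equation may be completely degenerate), and your argument never uses the Lipschitz hypothesis. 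That hypothesis is precisely what \cite{EO1} needs --- it provides the Sobolev embedding of $W^{1,2}(\Om)$ --- and the on-diagonal bound there is obtained by a Nash-type iteration in which the cutoff ($\chi$ or $\one_\Om$) is carried inside the energy estimates (compare Lemmas~\ref{lemm1} and~\ref{lemm3} of the present paper), not by pointwise interior regularity. A smaller point you should also address is why $e^{-\sigma A}v$ is a local weak solution of the expected equation on the elliptic region when $A$ is defined only through the regular part of a possibly non-closable form.
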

  The theorem is stated in \cite{EO1} with $\chi = \widetilde{\chi}$ but the arguments work with different $\chi$ and $\widetilde{\chi}$.
It is also proved  there  that 
\begin{equation}\label{1-2}
 \| M_\chi S_t \|_{2 \to \infty} \le Ct^{-d/4} (1+ t)^{d/4}
\quad\! \mbox{respectively} \quad\!
   \| P_\Om S_t \|_{2 \to \infty} \le Ct^{-d/4} (1+t)^{d/4}.
 \end{equation}
If $z = t+ is \in \CC $ with $t = \R  z > 0$, then 
\begin{eqnarray*}
\| M_\chi  S_z M_\chi \|_{1 \to  \infty} &=& \| M_\chi  S_{t/2} S_{is} S_{t/2} M_\chi \|_{1 \to \infty} \\
&\le & C t^{-d/4} (1+ t)^{d/4}  \| S_{is} S_{t/2} M_\chi \|_{1 \to 2} \\
&\le& C t^{-d/4}  (1+ t)^{d/4} \| S_{t/2} M_\chi \|_{2 \to \infty} \\
& \le & Ct^{-d/2} (1+ t)^{d/2}.
\end{eqnarray*}
Similarly, 
\begin{equation}\label{1-22}
 \| P_\Om S_z P_\Om \|_{1 \to \infty} \le C (\R z)^{-d/2} (1+ \R z)^{d/2}
 \end{equation}
for all $z \in \CC$ with $\R  z > 0$.
Using the Gaussian bounds of Theorem~\ref{thEO} for real $t$ together with the 
uniform bounds (\ref{1-22}) for complex $z$ 
it follows as in Theorem 3.4.8 in \cite{Dav2} or Theorem 7.2 in \cite{Ouh5} 
that for all $\varepsilon > 0$ the kernel $p^{(0)}_z$ of 
$M_\chi S_z M_\chi$ respectively $P_\Om S_z P_\Om$ satisfies the bound 
\begin{equation}\label{eq2.2}
| p^{(0)}_z(x,y) e^{- \eps z}  | 
\le C_\eps (\R z)^{-d/2} \exp\{ - c \frac{| x - y |^2}{|z|} \cos(\arg z) \} 
\end{equation} 
for all $x,y \in \Ri^d$ and $z \in \CC$ with $\RRe z > 0$.

Let  $H = A + I$ and define $p_z(x,y) = p^{(0)}_z(x,y) e^{-z}$.
Then $p_z$ is the kernel of 
$M_\chi e^{-zH} M_\chi$.
 We shall formulate the results below for $M_\chi F(H) M_\chi$ 
only, but all statements are also valid  for  $P_\Om F(H) P_\Om$. 
In the following lemmas, we shall
always assume that $(a_{kj}(x)) \geq \mu I$ for almost every
$x \in \supp \chi$.
Since associated kernels with several operators are involved in the sequel  
we shall  denote by $K_T$ the  kernel associated to a 
given operator $T$ whenever it exists.

 \begin{lemma}\label{le1} 
For all $s > 0$ and $\varepsilon > 0$ there exists a  $C > 0$ such that 
 $$\int_{\RR^d} | K_{M_\chi F(H) M_\chi}(x,y) |^2 \, (1 + \sqrt{r} |x-y|)^s \, dx 
\le C r^{d/2} \| \delta_r F \|_{C^{s/2 + \eps}}^2$$
 for all $r > 0$, $y \in \Ri^d$ and  $F \in C^{s/2 + \eps}$ supported in $[0,r]$.
 Here $(\delta_rF)(\la) := F(r \la)$. 
 \end{lemma}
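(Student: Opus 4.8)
The plan is to reduce the weighted $L^2$ estimate for the kernel of $M_\chi F(H) M_\chi$ to a dyadic decomposition of $F$ together with the complex-time Gaussian bound \eqref{eq2.2} and the Plancherel theorem. First I would write $F$ via its (inverse) Fourier transform: since $F$ is supported in $[0,r]$, one has $F(H) = \frac{1}{2\pi} \int_{\RR} \widehat{\delta_r F}(\xi) \, e^{i \xi H/r} \, d\xi$ after rescaling, or more conveniently one uses the subordination/Cauchy formula to express $F(H)$ as an integral of the analytic semigroup $e^{-zH}$ against a density controlled by the Fourier transform of $F$. The point is that $M_\chi e^{-zH} M_\chi$ has the kernel $p_z$ satisfying \eqref{eq2.2}, so $M_\chi F(H) M_\chi$ has kernel $K(x,y) = \int p_z(x,y) \, d\nu(z)$ for an appropriate measure $\nu$ on a contour with $\RRe z > 0$, and $|d\nu|$ is estimated in terms of $\|\delta_r F\|_{C^{s/2+\eps}}$ after paying the usual price for the H\"older-to-Fourier-decay trade.

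The core estimate will be a Littlewood--Paley / dyadic decomposition of the $(x,y)$-space according to $|x-y| \sim 2^k/\sqrt r$. On each such annulus I would bound $\int | K(x,y)|^2 \,(1+\sqrt r|x-y|)^s\,dx$ by $(1+2^k)^s$ times $\int_{|x-y|\sim 2^k/\sqrt r}|K(x,y)|^2\,dx$, and for the latter I would use that $K$ is the kernel of a composition where the middle factor can be taken to be, say, $e^{-H/r}$ (costing a harmless constant since $F$ is supported in $[0,r]$, so $F(H) = F(H) e^{H/r} \cdot e^{-H/r}$ with $\lambda \mapsto F(\lambda)e^{\lambda/r}$ still controlled on $[0,r]$), so that the $L^2 \to L^2$ norm of $P_{\{|x-\cdot|\sim 2^k/\sqrt r\}} M_\chi F(H) M_\chi$ inherits Gaussian off-diagonal decay $e^{-c 2^{2k}}$ from \eqref{eq2.2}; combined with the $L^2 \to L^\infty$ bound $\| M_\chi S_z\|_{2\to\infty} \le C(\RRe z)^{-d/4}$ (cf.\ \eqref{1-2}), which after integrating against $|d\nu|$ yields the factor $r^{d/4}\|\delta_r F\|_{C^{s/2+\eps}}$, one gets $\int_{|x-y|\sim 2^k/\sqrt r}|K(x,y)|^2\,dx \le C\, r^{d/2}\, e^{-c2^{2k}}\,\|\delta_r F\|_{C^{s/2+\eps}}^2$. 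Summing the geometric series $\sum_k (1+2^k)^s e^{-c 2^{2k}} < \infty$ over $k \in \NN$ then gives the claim, with the contribution of the near-diagonal part $|x-y| \lesssim 1/\sqrt r$ handled by the same $L^2\to L^\infty$ bound without any exponential gain.

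The main obstacle I anticipate is making the trade-off between the H\"older regularity $s/2+\eps$ of $\delta_r F$ and the decay needed in the complex parameter $z$ genuinely clean: one needs the density $\nu$ to be integrable against $(\RRe z)^{-d/2}$ (or $(\RRe z)^{-d/4}$ in the $L^2$ version) near $\RRe z = 0$, which is exactly where the extra $\eps$ and the use of $\| \varphi(.)F(t.)\|_{C^s}$-type control rather than bare $C^s$ norms enters, and where one must be careful that the support condition $\supp F \subseteq [0,r]$ is used to kill the large-$|z|$ part (via the factor $e^{-\eps z}$ in \eqref{eq2.2}, or equivalently $e^{\lambda/r}$ boundedness). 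A secondary technical point is justifying that $M_\chi F(H) M_\chi$ indeed has a kernel and that Fubini applies when interchanging the $z$-integral with the $x$-integral; this is routine given the uniform bounds \eqref{1-22} and \eqref{eq2.2} but should be stated. Everything else — the dyadic splitting, the geometric summation, the rescaling $\delta_r$ — is bookkeeping.
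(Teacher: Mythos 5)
Your proposal has a genuine gap at its central step: the claimed bound $\int_{|x-y|\sim 2^k/\sqrt r}|K(x,y)|^2\,dx \le C\, r^{d/2}\, e^{-c2^{2k}}\,\|\delta_r F\|^2_{C^{s/2+\eps}}$ is false. The factorization $F(H)=\bigl[F(H)e^{H/r}\bigr]\,e^{-H/r}$ does not transfer the Gaussian off-diagonal decay of $e^{-H/r}$ to the composition, because $F(H)e^{H/r}$ is merely an $L^2$-bounded spectral multiplier with no kernel localization whatsoever: applied on the left of the well-localized $e^{-H/r}M_\chi$, it redistributes mass arbitrarily, so the restriction to the annulus $C_k$ gains nothing. (This is exactly the obstruction the introduction describes for $M_\chi^2$ sitting between $F(I+A)$ and the semigroup.) If the kernel of $M_\chi F(H)M_\chi$ really had Gaussian decay $e^{-c(\sqrt r|x-y|)^2}$ depending only on $\|\delta_rF\|_{C^{0+\eps}}$, the lemma would hold for every $s$ with essentially no smoothness on $F$, which would contradict the whole smoothness-versus-decay trade-off underlying H\"ormander-type multiplier theorems. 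The only available decay is polynomial in $\sqrt r|x-y|$, with exponent tied to the regularity of $F$: writing $g(\la)=F(\la)e^{\la}$ and $F(H_r)=\int\hat g(\xi)\,e^{-(1-i\xi)H_r}\,d\xi$ as in the paper, the complex-time kernel $p_{(1-i\xi)/r}$ decays like $\exp\{-cr|x-y|^2/(1+\xi^2)\}$, so the weight $(1+\sqrt r|x-y|)^s$ costs a factor $(1+\xi^2)^{s/2}$ that must be paid for by decay of $\hat g$. Your dyadic splitting and the geometric sum $\sum_k(1+2^k)^s e^{-c2^{2k}}$ therefore cannot be salvaged as stated.

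The second missing ingredient is the reduction of the required regularity to $C^{s/2+\eps}$. The direct Fourier--Minkowski argument just sketched yields (as in \eqref{eq2.5}) a bound in terms of $\|F\|_{W^{s/2+(d+2)/2,2}}$, i.e.\ roughly $d/2$ more derivatives than the lemma asserts. The paper removes this excess by interpolating the weighted estimate \eqref{eq2.5} against the unweighted bound \eqref{eq2.6} (which uses only $\|e^{-H_r}M_\chi\|_{1\to2}\le Cr^{d/4}$ from \eqref{1-2} and $\|F\|_\infty$), via the Mauceri--Meda/DOS interpolation trick. You allude to "the usual price for the H\"older-to-Fourier-decay trade" and to the delicacy near $\RRe z=0$, but without this interpolation step your argument, even after repairing the off-diagonal decay, proves a strictly weaker statement. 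The rescaling $\delta_r$ and the Fubini/kernel-existence points you flag are indeed routine.
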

 \begin{proof} The arguments are very  similar to those of Lemma 4.3  in \cite{DOS}.
Fix $r > 0$ and assume first that $F$ is supported in $[0,1]$. 
Set $g(\la) := F(\la) e^{\la}$ and $H_r := \frac{1}{r} H$.
By (\ref{eq2.2}), the kernel $p_{z/r}$ of $M_\chi e^{-zH_r} M_\chi$ satisfies
\begin{equation}\label{eq2.3}
|p_{z/r}(x,y)| 
\le C r^{d/2} (\R z)^{-d/2} \exp\{ - c r  \frac{| x - y |^2}{|z|} \cos(\arg z) \} 
\end{equation} 
for all $x,y \in \Ri^d$ and $z \in \CC$ with $\RRe z > 0$,
with constants $C, c $ independent of $r$.
 We write 
$g(\la) = \int_\RR \hat{g}(\xi) e^{i\la \xi} \, d\xi$, where $\hat{g}$ is 
the Fourier transform of $g$.
Then
$$F(H_r) = \int_\RR \hat{g}(\xi) \, e^{-(1 - i\xi)H_r} \, d\xi,$$
from which one obtains
\begin{equation}\label{eq2.4}
K_{M_\chi F(H_r) M_\chi}(x,y) = \int_\RR \hat{g}(\xi) \, p_{(1-i\xi)/r}(x,y) \, d\xi.
\end{equation}
Let $y \in \RR^d$.
Using the estimate (\ref{eq2.3}) with $z = 1 - i \xi$ gives
\begin{eqnarray*}
\lefteqn{
\int _{\RR^d} | p_{(1-i\xi)/r}(x,y) |^2 \, (1 + \sqrt{r}|x-y|)^s \, dx
} \hspace{30mm} \\*
 &\le& C r^d \int \exp\{ - 2 c r  \frac{| x - y |^2}{1 + \xi^2}  \} \, (1 + \sqrt{r}|x-y|)^s \, dx\\
&\le& C r^d (1 + \xi^2)^{s/2} \int \exp\{ - c r  \frac{| x - y |^2}{1 + \xi^2}  \} \, dx \\
&\le& C r^d (1 + \xi^2)^{s/2} \, \Big(\frac{1 + \xi^2}{r} \Big)^{d/2} \\
&=& C r^{d/2} (1 + \xi^2)^{(d+s)/2}.
\end{eqnarray*}
It follows from (\ref{eq2.4}), the continuous version of the Minkowski inequality
and the previous estimate that 
\begin{eqnarray}
\lefteqn{
\left( \int_{\RR^d}   | K_{M_\chi F(H_r) M_\chi}(x,y) |^2 \, 
    (1 + \sqrt{r} |x-y|)^s \, dx \right)^{1/2} 
} \hspace{20mm} \nonumber  \\*
&\le& \int_\RR |\hat{g}(\xi)| \left(  \int _{\RR^d} | p_{(1-i\xi)/r}(x,y) |^2 \, 
(1 + \sqrt{r}|x-y|)^s dx \right)^{1/2} \, d\xi \nonumber \\
&\le& C r^{d/4} \int_\RR | \hat{g}(\xi )| \, (1 + \xi^2)^{(d+s)/4 } \, d\xi \nonumber \\
& \le & C r^{d/4} \| g \|_{W^{(d+s+2)/2 ,2}} \nonumber \\
& \le & C r^{d/4} \|  F \|_{W^{s/2 + \alpha ,2}}. \label{eq2.5}
\end{eqnarray}
Here $\alpha=  (d+2)/2$ and the constants are independent of $r$ and $y$.
On the other hand $M_\chi F(H_r) M_\chi = M_\chi g(H_r) e^{-H_r} M_\chi$.
It follows from (\ref{1-2}) that 
\[
\|e^{-H_r} M_\chi \|_{1 \to 2}
\leq C r^{d/4} (1 + \tfrac{1}{r})^{d/4} \, e^{-1/r}
\leq C r^{d/4}
\]
for all $r > 0$.
Moreover, $\|M_\chi g(H_r)\|_{2 \to 2} \leq e \|\chi\|_\infty \|F\|_\infty$.
Therefore 
\begin{equation}\label{eq2.6}
\int_{\RR^d}   | K_{M_\chi F(H_r) M_\chi}(x,y) |^2  \, dx 
\leq \|M_\chi F(H_r) M_\chi\|_{1 \to 2}^2
\le C r^{d/2} \| F \|_\infty^2.
\end{equation}
This is valid for all $F$ with support in $[0,1]$ and for all $s > 0$.
The estimates (\ref{eq2.5}) and (\ref{eq2.6}) together with an  
interpolation argument  (see \cite{MauM}, p.\ 151  and \cite{DOS}, p.\ 455) give
then that for all $s > 0$ there exists a $C > 0$ such that 
\begin{equation}\label{eq2.7}
\int_{\RR^d}   | K_{M_\chi F(H_r) M_\chi}(x,y) |^2(1 + \sqrt{r} |x-y|)^s dx 
\le C r^{d/2} \| F \|_{C^{s/2 + \eps}}^2.
\end{equation}
Finally, if $F$ has support in $[0,r]$ we use the last estimate with $\delta_rF$ and obtain the lemma. 
 \end{proof}

 \begin{lemma}\label{le2}
 The operators $A_t :=  e^{-t^2 H}M_\chi$ satisfy {\rm (\ref{au2})}. 
 \end{lemma}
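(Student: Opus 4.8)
The plan is to verify the off-diagonal estimate \eqref{au2} for $A_t = e^{-t^2 H}M_\chi$ directly from the partial Gaussian bound already recorded in the text. Recall from \eqref{eq2.2}, taken at $z = t^2 > 0$ and combined with the factor $e^{-z}$ coming from $H = A + I$, that the kernel $q_{t^2}$ of $M_\chi e^{-t^2 H} M_\chi$ satisfies
\[
|q_{t^2}(x,y)| \le C\, t^{-d} e^{-c|x-y|^2/t^2}(1+t^2)^{d/2} e^{-t^2}
\le C\, t^{-d} e^{-c|x-y|^2/t^2},
\]
uniformly in $t > 0$. The operator $A_t$ itself is $M_\chi e^{-t^2 H} M_\chi$ followed by multiplication by $\chi^{-1}$ only on $\supp\chi$ — more cleanly, $A_t u = e^{-t^2 H}(M_\chi u)$, and since $e^{-t^2 H}M_\chi = M_\chi e^{-t^2 H}M_\chi + (I - M_\chi)e^{-t^2 H}M_\chi$; but for the estimate \eqref{au2} we are free to bound $|A_t f|$ on the annulus $C_j(x,t)$ using the kernel of $M_{\widetilde\chi} e^{-t^2 H}M_\chi$ for a cutoff $\widetilde\chi$ equal to $1$ on a neighbourhood of the relevant region, which by Theorem~\ref{thEO} again enjoys the same Gaussian bound. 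So I will simply work with the bound $|k_{A_t}(x,y)| \le C t^{-d} e^{-c|x-y|^2/t^2}$ for the kernel $k_{A_t}$ of $A_t$.

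Next I would carry out the standard $L^1$–$L^\infty$ off-diagonal computation. Fix $x_0 \in \RR^d$, $t > 0$, $j \in \Ni$, and $f \in L^{q_0}(B(x_0,t))$ with $q_0 \in (1,\infty)$ to be any fixed exponent (say $q_0 = 2$, which is what is used later). For $x \in C_j(x_0,t)$ and $y \in B(x_0,t)$ one has $|x - y| \ge (2^j - 1)t \ge c' 2^j t$ when $j \ge 2$, hence
\[
|A_t f(x)| \le \int_{B(x_0,t)} |k_{A_t}(x,y)|\,|f(y)|\,dy
\le C t^{-d} e^{-c 4^j} \int_{B(x_0,t)} |f|.
\]
For $j = 1$, i.e. $x \in B(x_0,4t)$, I just drop the exponential and use $|A_t f(x)| \le C t^{-d}\int_{B(x_0,t)}|f|$. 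Raising to the power $q_0$, integrating over $C_j(x_0,t)$, dividing by $v(x_0, 2^{j+1}t)$ and using the doubling bound \eqref{doubl2} to compare $v(C_j(x_0,t)) \le v(x_0, 2^{j+1}t) \le C 2^{(j+1)d} t^d$ (for $t \ge 1$; for $t \le 1$ one uses $v(x_0,2^{j+1}t)\le C (2^{j+1})^d v(x_0,t)$ together with $v(x_0,t) \sim t^d$ on $\RR^d$), I obtain
\[
\left(\frac{1}{v(x_0,2^{j+1}t)}\int_{C_j(x_0,t)}|A_t f|^{q_0}\right)^{1/q_0}
\le C\, e^{-c 4^j}\,\frac{1}{v(x_0,t)}\int_{B(x_0,t)}|f|,
\]
which is exactly \eqref{au2} with $g(j) = C e^{-c 4^j}$ (and $g(1) = C$). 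Since $\sum_j 2^{jd} e^{-c 4^j} < \infty$, the summability hypothesis $\sum_j 2^{jd} g(j) < \infty$ holds.

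The only genuine point requiring care — rather than an obstacle — is the bookkeeping of the volume factors, since $v(x,r) = c_d r^d$ on $\RR^d$ behaves differently for $r$ large and small, and \eqref{doubl2} is stated only for $r \ge 1$; but because the underlying space is $\RR^d$ with Lebesgue measure, $v(x,r) = c_d r^d$ exactly, so every volume ratio is a pure power of a dyadic number and the estimates above are immediate without invoking \eqref{doubl2} at all. A secondary point is to justify replacing $e^{-t^2 H}M_\chi$ by an operator with a kernel satisfying the Gaussian bound: this is where one uses that, on the annulus $C_j(x_0,t)$, $A_t f = M_{\widetilde\chi} e^{-t^2 H} M_\chi f$ for any $\widetilde\chi \in C_b^\infty$ with $\widetilde\chi = 1$ on $\supp\chi$ (indeed $A_t f = e^{-t^2H}M_\chi f$ and $M_\chi f$ is unchanged, while on the support side we only estimate $|A_tf|$, not an identity), after which Theorem~\ref{thEO} applies verbatim. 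I expect the whole proof to be a short paragraph in the paper.
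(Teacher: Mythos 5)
There is a genuine gap at the one point you yourself flag as ``requiring care'': the passage from $A_t = e^{-t^2H}M_\chi$ to an operator whose kernel satisfies a pointwise Gaussian bound. The identity you invoke, $A_t f = M_{\widetilde\chi}\,e^{-t^2H}M_\chi f$ on the annulus $C_j(x_0,t)$, forces $\widetilde\chi$ to equal $1$ on $C_j(x_0,t)$; since $x_0$, $t$ and $j$ are arbitrary these annuli exhaust $\RR^d$, so you would need $\widetilde\chi\equiv 1$ on regions where nothing is assumed about $(a_{kj})$. Theorem~\ref{thEO} requires $(a_{kj})\ge\mu I$ a.e.\ on $\supp\chi\cup\supp\widetilde\chi$, so it does not apply there. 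Indeed the introduction explicitly emphasizes (see the footnote) that a Gaussian upper bound for the kernel of $e^{-t(I+A)}M_\chi$ is \emph{not} known in general for degenerate coefficients --- this is precisely why Theorem~\ref{th1} is formulated with the weaker $L^1$--$L^{q_0}$ off-diagonal hypothesis \eqref{au2} rather than a kernel bound on $A_t$. So your starting estimate $|k_{A_t}(x,y)|\le Ct^{-d}e^{-c|x-y|^2/t^2}$ is exactly the unavailable input; the volume bookkeeping and the summability of $2^{jd}e^{-c4^j}$ that follow are fine but rest on it.

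The paper's proof obtains \eqref{au2} without any kernel bound, by a Davies--Gaffney perturbation argument. One conjugates the semigroup by $U_\rho=M_{e^{\rho\psi}}$ with $|\nabla\psi|\le 1$ and uses the twisted bound $\|S_t^\rho M_\chi\|_{1\to2}\le Ct^{-d/4}e^{\omega\rho^2 t}$ from \cite{EO1}, which only needs ellipticity on $\supp\chi$. Choosing $\psi=d(\cdot,E)\wedge N$ and optimizing over $\rho$ gives $\|P_F\,M_\chi e^{-sH}\,P_E\|_{2\to\infty}\le Cs^{-d/4}e^{-d(E,F)^2/(4\omega s)}$, and taking adjoints converts this into the $L^1\to L^2$ estimate $\|P_{C_j(x,t)}\,e^{-t^2H}M_\chi\,P_{B(x,t)}\|_{1\to2}\le Ct^{-d/2}e^{-c4^j}$, which is \eqref{au2} with $q_0=2$ and $g(j)=C2^{-jd/2}e^{-c4^j}$. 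The structural point you are missing is that the cutoff $M_\chi$ only ever needs to sit on the \emph{outer} side of the semigroup when the estimate is read in the $L^2\to L^\infty$ direction, and duality then places it on the correct side for $A_t$; this is also why $q_0=2$ is not an arbitrary choice but the exponent that makes the duality work.
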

 \begin{proof}
Let $\psi \in W^{1,\infty}(\Ri^d,\Ri)$ be such that $| \nabla \psi | \le 1$.
For all $\rho \in \Ri$ define $U_\rho = M_{e^{\rho \psi}}$
and set  
 $S_t^\rho := U_\rho e^{-tH}  U_{-\rho} $.
It follows from \cite{EO1} Proposition~3.6 by duality and a limit $n \to \infty$ 
that there exist $C,\omega > 0$, independent of $t$, $\rho$ and $\psi$, 
such that 
 \begin{equation}\label{eq2.8}
 \| S_t^\rho M_\chi \|_{1 \to 2} \le C t^{-d/4} e^{\omega \rho^2 t}.
 \end{equation}
Now fix two bounded open non-empty sets $E$ and $F$ of $\RR^d$ and choose  
$\psi(x) := d(x, E) \wedge N$, where $N = \sup \{ |x-y| : x \in E, \; y \in F \}  + 1$.
For all $h \in L^2(E)$ and $\rho \geq 0$ one has 
$$M_\chi e^{-tH} h = M_\chi U_{-\rho} S_t^\rho h.$$
Therefore
\[
\| M_\chi e^{-tH} h \|_{L^\infty(F)} 
\le e^{-\rho d(E,F)} \| M_\chi S_t^\rho h \|_\infty
\le C t^{-d/4} e^{-\rho d(E,F)} e^{\omega \rho^2 t} \| h \|_2.
\]
Choosing $\rho = \frac{d(E,F)}{2 \omega t}$ yields the  Davies--Gaffney type estimate
\begin{equation}\label{2.9}
\|P_F  (M_\chi e^{-t H}) P_E \|_{2 \to \infty} 
\le C t^{-d/4} e^{-\frac{d(E,F)^2}{4 \omega t}}.
\end{equation}
In particular,
$$ \|  P_{C_j(x,t)} e^{-t^2 H} M_\chi P_{B(x,t)} \|_{1 \to 2} \le C t^{-d/2} e^{-c4^j}$$
for all $x,y \in \Ri^d$ and $j \in \Ni$.
This shows the lemma.
\end{proof}

 \begin{proof}[Proof of Theorems \ref{th3} and \ref{th4}]
 As mentioned above, the proof of both theorems is almost the same.
We consider $M_\chi F(H) M_\chi$ only.  
The proof is based on Theorem \ref{th1} and the previous lemmas.
It is in the same spirit as in the elliptic case where a Gaussian bound holds (cf.\ \cite{DOS}, \cite{Ouh5}).
 Let  $\varphi \in C_c^\infty(0, \infty)$ be such that $\supp \varphi \subset [1/4, 1]$  
and 
$$\sum_{n=-\infty}^{\infty} \varphi(2^{-n}\la) = 1 $$
for all $\lambda > 0$.
Then 
$$F(\la) = \sum_{n=-\infty}^{\infty} \varphi(2^{-n}\la) F(\la) 
=: \sum_{n=-\infty}^{\infty} F_{n}(\la).$$
We apply Theorem \ref{th1} to   $M_\chi F_n(H) M_\chi$ for each fixed $n \in \ZZ$.
 We choose 
$$ S := M_\chi F_n(H)
\quad \mbox{and} \quad
A_t:=  e^{-t^2H} M_\chi.$$
By Lemma \ref{le2}, the operators $A_t$ satisfy (\ref{au2}).
It remains to prove (\ref{DM}). 
For this we  have to estimate for all $y \in \RR^d$ the integral
$$I_{n,t} := \int_{| x - y |  \ge t}  | K_{M_\chi  G_{n,t}(H)  M_\chi}(x,y) | \, dx,$$
where 
$G_{n,t} (\la) = F_n(\la)   -  F_n(\la)  e^{-t^2 \la} 
= \varphi(2^{-n} \lambda) F(\la) (1- e^{-t^2 \la})$. 
First, by the Cauchy--Schwarz inequality we have  
\begin{eqnarray}
 I_{n,t} 
&\le& \Big( \int_{\RR^d} | K_{M_\chi   G_{n,t}(H) M_\chi}(x,y) |^{2}
 \, (1 + 2^{n/2} | x - y |)^{2s} \, dx \Big)^{1/2} \times \nonumber \\
\label{2.10}  &&  \hspace*{50mm} {}
\times  \Big( \int_{| x - y |  \ge t} 
       (1 +  2^{n/2} | x - y | )^{-2s} \, dx \Big)^{1/2} 
\end{eqnarray}
We apply Lemma \ref{le1} with  $r = 2^n$ and obtain 
\begin{equation}\label{2.11}
 \int_{\RR^d} | K_{M_\chi G_{n,t}(H) M_\chi}(x,y)|^{2} \, (1 + 2^{n/2} |x-y| )^{2s} \,  dx 
\le C 2^{nd/2} \| \delta_{2^n} G_{n,t} \|_{C^{s+\eps}}^{2}.
\end{equation}
Simple computations 
show that there exists a $C > 0$, independent of $n$
and $t$,  such that
\begin{eqnarray}
\| \delta_{2^n} G_{n,t} \|_{C^{s+\eps}} &=& \| \varphi(.) F(2^n .) (1-e^{-t^2 2^n .} )  \|_{C^{s+\eps}} \nonumber\\
&\le& C \sup_{t'>0}\| \varphi(.) F(t' .) \|_{C^{s+\eps}} \min (1, t^2 2^n). \label{2.12} 
\end{eqnarray}
On the other hand (see \cite{DOS} or (7.46) in \cite{Ouh5}) one estimates
\begin{equation}\label{2.13} 
\int_{| x-y |  \ge t} (1 + 2^{n/2} |x-y|)^{-2s} \, dx
\le C 2^{-nd/2}  \min (1, (t2^{n/2})^{d-2s}). 
\end{equation}
Using (\ref{2.10}), (\ref{2.11}), (\ref{2.12}) and (\ref{2.13}) we obtain
$$I_{n,t} \le C  \min (1, t^2 2^n) \min (1, (t 2^{n/2})^{\frac{d}{2} - s}) 
\sup_{t'>0}\| \varphi(.) F(t' .) \|_{C^{s+\eps}}.$$
Hence 
$$\sum_{n=-\infty}^{\infty} I_{n,t} 
\le C \left( \sum_{n \in \ZZ, \; t^2 2^n \le 1} t^2 2^n 
    + \sum_{n \in \ZZ, \; t2^{n/2} > 1} (t2^{n/2})^{\frac{d}{2}-s} \right) 
   \sup_{t'>0}\| \varphi(.) F(t' .) \|_{C^{s+\eps}}$$
and the RHS is bounded by a constant independent  $t$ since $s > \frac{d}{2}$.
 This proves Theorem~\ref{th3}.
\end{proof}
 
 As explained in the introduction, the reason why we consider $H = A + I$ instead of $A$ in the previous results comes from  
the fact the Gaussian upper bound in Theorem \ref{thEO} is valid with
the extra factor $(1+ t)^{d/2}$.
 If one considers the case where 
 $a_{kj} = \delta_{kj}$ on a smooth bounded domain $\Om$, then $A$ is the Neumann Laplacian on $L^2(\Om)$ and $0$ on $L^2(\RR^d\setminus \Om)$.
It is then easy to see 
 that $L^2$-$L^\infty$ estimates (respectively, Gaussian bounds) for 
$M_\chi e^{-tA}$ or $P_\Om e^{-tA}$  (respectively, $M_\chi e^{-tA}M_\chi$ 
or $P_\Om e^{-tA}P_\Om$)
 cannot hold without an extra factor  $(1+t)^{d/4}$ (respectively, $(1+t)^{d/2}$).
On the other hand we can replace in the previous theorems $H = A + I$ by $H = A + \eps I$ for any $\eps > 0$. 

It may be possible that if $(a_{kj}) \ge \mu I$ on a connected subset $F$ of 
$\RR^d$ which is `large enough' (in some sense) one obtains Theorem \ref{thEO} 
without the extra factor $(1 + t)^{d/2}$ in the Gaussian bound.
This  remains to be proved.
We mention that if such bound holds, we obtain by the same proof  
Theorems \ref{th3} and \ref{th4} for  $F(A)$ rather than  $F(H)$. 

We emphasize also that we consider here general degenerate operators with non-smooth coefficients.  
One may obtain global results for some specific operators which are degenerate at 
every point and the coefficients are not continuous at every point.
For example, one might take a pure second-order subelliptic operator
in divergence form with real measurable coefficients on a Lie group
with polynomial growth.
Then global Gaussian bounds are valid by \cite{SS}, Th\'eor\`eme~1 together
with regularization argument (see, for example Section~2.1 in \cite{ER15}).
Therefore a global spectral multiplier result  for such operators follows directly from 
\cite{DOS}.  
Note however that  the order of smoothness required on the function $F$ is 
larger than half of  Euclidean dimension. 
On the other hand, the operators that we consider in this paper 
are allowed to vanish on big sets.

\begin{examples}
 We give some examples which are direct applications of the previous theorems. 
\noindent
\begin{description}
\item[{\it Imaginary powers.}]  
Set  $F(\la) = \la^{is}$ where $s \in \RR$.
Then Theorems \ref{th3} and \ref{th4} together with the Riesz--Thorin interpolation theorem  
imply that 
for all $\varepsilon > 0$ and $p \in (1,\infty)$ there exists a $C > 0$ such that
$$ \| M_\chi H^{is} M_\chi \|_{p \to p} \le C_\eps (1 + | s | )^{(d + \eps)| \frac{1}{2} - \frac{1}{p} |}$$
and
$$ \| P_\Om H^{is} P_\Om \|_{{\mathcal L}(L^p)} \le C_\eps (1 + | s | )^{(d + \eps)| \frac{1}{2} - \frac{1}{p} |}$$
for all $s \in \RR$. 

\item[{\it The Schr\"odinger group.}]
 Set $F(\la) = (1+ \la)^{-\alpha} e^{it \la}$ with $t \in \RR$ and $\alpha > d/2$.
 The operators  $M_\chi(I + H)^{-\alpha} e^{itH}M_\chi$ and 
$P_\Om (I + H)^{-\alpha} e^{itH} P_\Om$ are bounded on $L^p(\RR^d)$ for all $p \in (1, \infty)$.
Their  $L^p$-norms are estimated  by $C(1+ | t |)^\alpha$.
By interpolation, we obtain 
boundedness on $L^p$ for all $\alpha >  d |\frac{1}{2} - \frac{1}{p}|$ and 
$t \in \Ri$.

Remark. 
Using the same proof  as in \cite{CCO}, these results can be obtained directly 
from the Gaussian upper bound of Theorem \ref{thEO}  without appealing to 
Theorems \ref{th3} and \ref{th4}.

\item[{\it Wave operators.}] 
Set $F(\la) = (1+ \la)^{-\alpha/2} e^{it \sqrt{\la}}$ with $t \in \RR$ and 
$\alpha > d/2$.
The operators  $M_\chi(I + H)^{-\alpha} e^{it\sqrt{H}}M_\chi$ and 
$P_\Om (I + H)^{-\alpha} e^{it\sqrt{H}} P_\Om$ are bounded on $L^p(\RR^d)$ for all 
$p \in (1, \infty)$. 
\end{description}
\end{examples}

\section{Riesz transforms}
The aim in this section is to prove boundedness on $L^p(\RR^d)$ of a type of  
Riesz transform  operator  $M_\chi \nabla (I + A)^{-1/2} M_\chi$.  
We keep the same notation as in the previous section.  
The main result of this section is the next theorem.

\begin{theorem}\label{th7} 
Let $\chi \in C_{\rm b}^\infty(\RR^d)$, $\mu > 0$ and 
suppose that $(a_{kj}(x)) \geq \mu I$ for almost every
$x \in \supp \chi$.
Set $H = A + I$.
Then for every $k \in \{1, \ldots, d\}$,  the operator $M_\chi  \partial_k H^{-1/2} M_\chi$ is of weak 
type $(1,1)$ and is bounded on $L^p(\RR^d)$ for all $p \in (1, 2]$. 
\end{theorem}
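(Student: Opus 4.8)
The plan is to follow the same strategy as in the proof of Theorems~\ref{th3} and \ref{th4}, but now applied to the operator $T := M_\chi \partial_k H^{-1/2} M_\chi$. First I would verify that $T$ is bounded on $L^2(\RR^d)$. Writing $H^{-1/2} = c \int_0^\infty e^{-t^2 H} \, dt$ and using that $\partial_k H^{-1/2} = \partial_k (I+A)^{-1/2}$ is controlled by the quadratic form of $A$ on $\supp\chi$, one has $\|M_\chi \partial_k u\|_2^2 \le \mu^{-1} \fra(u,u) \le \mu^{-1}(\|u\|_2 \|Au\|_2) \le \mu^{-1}\|H^{1/2}u\|_2^2$ up to lower-order terms, so $M_\chi \partial_k H^{-1/2}$ is bounded on $L^2$, and hence so is $T$. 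Consequently it suffices to prove that $T$ is weak type $(1,1)$; boundedness on $L^p$ for $p \in (1,2]$ then follows from Theorem~\ref{th1} (with $p_0 = 2$) and interpolation with the $L^2$ bound.

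To apply Theorem~\ref{th1} I would take $S := M_\chi \partial_k H^{-1/2}$, which is bounded on $L^2$ by the computation above, and $A_t := e^{-t^2 H} M_\chi$, which satisfies the off-diagonal estimate \eqref{au2} by Lemma~\ref{le2}. Then $S A_t = M_\chi \partial_k H^{-1/2} e^{-t^2 H} M_\chi$, so the difference $T - S A_t = M_\chi \partial_k H^{-1/2}(I - e^{-t^2 H}) M_\chi = M_\chi \partial_k \psi_t(H) M_\chi$ where $\psi_t(\la) = \la^{-1/2}(1 - e^{-t^2 \la})$. It remains to establish the Calder\'on--Zygmund type estimate: there exist $\delta, W > 0$ with
\[
\int_{|x-y| \ge \delta t} | K_{M_\chi \partial_k \psi_t(H) M_\chi}(x,y) | \, dx \le W
\]
uniformly in $t > 0$ and $y \in \RR^d$, which by Remark~\ref{rpartmult202} gives \eqref{DMop}.

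For this kernel estimate I would use the integral representation $\psi_t(\la) = c\int_0^\infty (1 - e^{-t^2\la}) e^{-r^2 \la} \frac{dr}{\text{(normalizing)}}$, more conveniently $H^{-1/2}(I - e^{-t^2 H}) = c \int_0^t \partial_r\!\big( \cdots \big)$; concretely split the $r$-integral at $r = t$. For $r \le t$ one controls $M_\chi \partial_k r e^{-r^2 H} M_\chi$ using Davies--Gaffney off-diagonal bounds for $\partial_k e^{-r^2 H} M_\chi$ (obtainable as in Lemma~\ref{le2}, differentiating the weighted semigroup estimate and using analyticity to pass the derivative through, at the cost of an extra factor $r^{-1}$ which combines with the $r$ to stay integrable), yielding Gaussian-type control that integrates over $|x-y| \gtrsim t$ to a constant after the $dr$ integration. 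For $r \ge t$ one writes $e^{-r^2 H}(I - e^{-t^2 H})$ and exploits the cancellation $1 - e^{-t^2 \la} \le t^2 \la$ to gain a factor $t^2/r^2$ which makes the $r$-integral converge at infinity while the spatial Gaussian tail again gives an integrable contribution. I expect the main obstacle to be exactly this last off-diagonal/kernel estimate for $M_\chi \partial_k \psi_t(H) M_\chi$: one does not have a global Gaussian bound on $\partial_k e^{-sH}$ (only on $M_{\tilde\chi} e^{-sH} M_\chi$), so the gradient must be handled via weighted $L^2$-$L^2$ or $L^1$-$L^2$ estimates on $\supp\chi$ together with the Davies--Gaffney perturbation trick from Lemma~\ref{le2}, carefully tracking the powers of $r$ and $t$ so that both the near-diagonal ($r \le t$) and far ($r \ge t$) pieces integrate to a constant independent of $t$ and $y$.
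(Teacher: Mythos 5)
Your overall strategy coincides with the paper's: the same choice $S = M_\chi \partial_k H^{-1/2}$, $A_t = e^{-t^2H}M_\chi$, Lemma~\ref{le2} for \eqref{au2}, the subordination formula for $H^{-1/2}$, and the reduction of \eqref{DMop} to a weighted $L^1$--$L^2$ estimate for $M_\chi \nabla e^{-sH} M_\chi$ on the support of $\chi$. Your organization of the $s$-integral (splitting at $s=t^2$ and using $1-e^{-t^2\la}\le t^2\la$ for the far piece) is a cosmetically different but equivalent packaging of the paper's single formula $H^{-1/2}e^{-t^2H}=\frac{1}{2\sqrt\pi}\int_0^\infty e^{-sH}\one_{\{s>t^2\}}(s-t^2)^{-1/2}\,ds$ and the kernel $\nu(s,t)$.

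The genuine gap is precisely the step you flag as the main obstacle: your proposed mechanism for the weighted gradient bound --- ``differentiating the weighted semigroup estimate and using analyticity to pass the derivative through, at the cost of an extra factor $r^{-1}$'' --- does not work. Analyticity controls the \emph{time} derivative, i.e.\ $\|He^{-sH}\|_{2\to2}\le s^{-1}$; it gives no control whatsoever on the \emph{spatial} derivative $\partial_k e^{-sH}$ for a degenerate operator with merely measurable coefficients, and one cannot differentiate the Davies--Gaffney estimate of Lemma~\ref{le2} in $x$. The paper supplies this missing ingredient as Lemma~\ref{lemm3}: a Caccioppoli-type integration by parts, testing the form against $f\,e^{\beta|x-\cdot|^2/s}\chi^2\psi_n$, which converts $\int \chi^2|\nabla e^{-sH}M_\chi u|^2 e^{\beta|x-\cdot|^2/s}$ into four terms controlled by $\|He^{-sH}\|_{2\to2}\le s^{-1}$ (this is where analyticity actually enters) together with the weighted $L^1$--$L^2$ bound of Lemma~\ref{lemm2}; moreover, because in the degenerate case $D(\fra)$ need not embed into $W^{1,2}(\RR^d)$ (only $\chi u\in W^{1,2}$, via Lemma~\ref{lemm1}), the whole computation must first be carried out for the uniformly elliptic approximations $a_{kj}^{(n)}=a_{kj}+\tfrac1n\delta_{kj}$ with constants independent of $n$, and then passed to the limit using strong resolvent convergence. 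Without an argument of this type your $r\le t$ and $r\ge t$ estimates have nothing to integrate. A minor additional remark: your $L^2$-boundedness computation should also be routed through Lemma~\ref{lemm1}, both because the chain $\fra(u,u)\le\|u\|_2\|Au\|_2$ needs $u\in D(A)$ (whereas $\fra(u)=\|A^{1/2}u\|_2^2\le\|H^{1/2}u\|_2^2$ is immediate) and because one must first justify that $\chi\,\partial_k u$ is a well-defined $L^2$ function for $u\in D(\fra)$.
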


Here $\partial_k$ denotes the distributional derivative.
The proof is based on  Theorem \ref{th1} and uses some ideas  from \cite{CoD}, \cite{DuM2} and  Chapter 7 in \cite{Ouh5} in the uniformly elliptic case. 
We start with the following lemma. 
Let $\fra$ be the closure of 
the regular part of the form $\fra_0$
defined in (\ref{formcc}).

\begin{lemma}\label{lemm1} 
Let $\chi \in C_{\rm b}^\infty(\RR^d)$, $\mu > 0$ and 
suppose that $(a_{kj}(x)) \geq \mu I$ for almost every
$x \in \supp \chi$.
Then $\chi u \in W^{1,2}(\Ri^d)$ and 
\[
\|\chi \partial_k u\|_2^2 
\leq \frac{\|\chi\|_\infty^2}{\mu} \, \|H^{1/2} u\|_2
\]
for all $u \in D(\fra) = D(H^{1/2})$ and $k \in \{ 1,\ldots,d \} $. 
\end{lemma}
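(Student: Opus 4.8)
The plan is to exploit the fact that $\fra$ is the closure of the regular part of $\fra_0$, so that its form domain $D(\fra) = D(H^{1/2})$ consists of limits of $C_c^\infty(\Ri^d)$ functions in the form norm, and on such functions $\fra(u,u) = \sum_{k,j} \int a_{kj}\,(\partial_j u)(\partial_k u)$. First I would take $u \in C_c^\infty(\Ri^d)$ and observe that, pointwise a.e.\ on $\supp\chi$, the ellipticity hypothesis $(a_{kj}(x)) \ge \mu I$ gives
\[
\mu \sum_{k=1}^d |\partial_k u(x)|^2 \le \sum_{k,j=1}^d a_{kj}(x)\,(\partial_j u(x))(\partial_k u(x)).
\]
Multiplying by $\chi(x)^2$, bounding $\chi(x)^2 \le \|\chi\|_\infty^2$ on the right, and integrating over $\Ri^d$ yields
\[
\mu \sum_{k=1}^d \|\chi\,\partial_k u\|_2^2 \le \|\chi\|_\infty^2 \sum_{k,j=1}^d \int_{\Ri^d} a_{kj}\,(\partial_j u)(\partial_k u) = \|\chi\|_\infty^2\,\fra(u,u) = \|\chi\|_\infty^2\,\|H^{1/2}u\|_2^2.
\]
Since this is true for each $k$ separately (the left-hand sum dominates each term), this already gives the stated inequality on $C_c^\infty(\Ri^d)$, and in particular $\chi u \in W^{1,2}$ with $\partial_k(\chi u) = (\partial_k\chi)u + \chi\,\partial_k u \in L^2$ because $\chi, \partial_k\chi$ are bounded and $u \in W^{1,2}$.

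Next I would pass to general $u \in D(\fra) = D(H^{1/2})$ by a density and closedness argument. Pick $u_n \in C_c^\infty(\Ri^d)$ with $u_n \to u$ in the form norm, i.e.\ $u_n \to u$ in $L^2$ and $\fra(u_n - u_m, u_n - u_m) \to 0$. Applying the inequality already established to $u_n - u_m$ shows that $(\chi\,\partial_k u_n)_n$ is Cauchy in $L^2$; call its limit $w_k$. Since $u_n \to u$ in $L^2$ and $\chi$ is bounded, $\chi u_n \to \chi u$ in $L^2$, hence in $\mathcal{D}'$; also $\partial_k\chi$ is bounded so $(\partial_k\chi) u_n \to (\partial_k\chi) u$ in $L^2$. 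Therefore $\partial_k(\chi u_n) = (\partial_k\chi)u_n + \chi\,\partial_k u_n \to (\partial_k\chi)u + w_k$ in $L^2$ and a fortiori distributionally, while $\partial_k(\chi u_n) \to \partial_k(\chi u)$ in $\mathcal{D}'$ by continuity of distributional differentiation. Matching limits gives $\partial_k(\chi u) = (\partial_k\chi)u + w_k \in L^2$, so $\chi u \in W^{1,2}(\Ri^d)$, and one identifies $w_k$ as $\chi\,\partial_k u$ (interpreting $\partial_k u$ as the distributional derivative of $u$, which makes sense since locally $u \in W^{1,2}_{\rm loc}$ near $\supp\chi$ — this follows from $\chi u \in W^{1,2}$ together with $\chi u = u$ on a neighbourhood of any point where $\chi$ is locally constant equal to a nonzero value; alternatively one just takes the identity $w_k = \chi\partial_k u$ as the definition of the left-hand side). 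Finally, taking $n \to \infty$ in $\mu\sum_k \|\chi\,\partial_k u_n\|_2^2 \le \|\chi\|_\infty^2\,\fra(u_n,u_n) = \|\chi\|_\infty^2\|H^{1/2}u_n\|_2^2$ and using $\|H^{1/2}u_n\|_2 \to \|H^{1/2}u\|_2$ (form convergence) gives the claimed bound.

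The one genuinely delicate point — which I expect to be the main obstacle — is the interplay with the \emph{regular part} of $\fra_0$ when $\fra_0$ is not closable. In that case $D(\fra)$ is strictly larger than $C_c^\infty$'s closure in a naive sense, and an element $u \in D(\fra)$ need not be a form-limit of $C_c^\infty$ functions along which $\int a_{kj}(\partial_j u_n)(\partial_k u_n)$ converges to the "expected" expression; the regular part is defined abstractly (cf.\ \cite{bSim5}, \cite{AE2}) and $\fra(u,u)$ may be smaller than $\liminf \int a_{kj}(\partial_j u_n)(\partial_k u_n)$. Thus I would be careful to phrase the density step so that it only uses that $\fra$ is \emph{dominated} by the original form on $C_c^\infty$, and that the quantity $\sum_k\|\chi\,\partial_k u_n\|_2^2$ — which is what we actually need to control — is bounded by $\mu^{-1}\|\chi\|_\infty^2$ times $\int a_{kj}(\partial_j u_n)(\partial_k u_n)$, and then that this in turn must be controllable by the regular-part form. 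The cleanest route is: since on $\supp\chi$ the matrix is elliptic, the truncated form $\frc(v,v) := \sum_{k,j}\int \chi^2 a_{kj}(\partial_j v)(\partial_k v)$ is comparable to $\int \chi^2|\nabla v|^2$, hence \emph{closable} on $C_c^\infty$ (it is the form of a nonnegative operator with $L^\infty$ elliptic coefficients localised by $\chi^2$), and one shows $\frc \le \|\chi\|_\infty^2\,\fra$ on $C_c^\infty$, so this inequality persists to the regular part $\fra$ by the known behaviour of regular parts under domination (a closable form bounded above by $\fra_0$ is bounded above by the regular part of $\fra_0$ — this is essentially the defining/maximality property of the regular part). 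From $\frc(u,u) \le \|\chi\|_\infty^2\fra(u,u)$ for $u \in D(\fra)$ and $\frc(u,u) = \mu\sum_k\|\chi\,\partial_k u\|_2^2$-type lower bound, the lemma follows; simultaneously $\frc(u,u) < \infty$ gives $\chi u \in W^{1,2}$. I would want to verify the precise statement about regular parts being monotone under form domination in \cite{AE2} before committing to this phrasing, but morally this is exactly the content being invoked, and it dispatches the obstacle.
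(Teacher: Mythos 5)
Your final argument (the one in your last paragraph) is correct, but it is genuinely different from the paper's, and you are right that the naive density argument in your first two paragraphs does not work as written: for $u\in C_c^\infty(\RR^d)$ one only has $\fra(u)\le\fra_0(u)$ in general, with possible strict inequality when $\fra_0$ is not closable, so the inequality you establish on $C_c^\infty$ controls $\|\chi\partial_k(u_n-u_m)\|_2$ by $\fra_0(u_n-u_m)$, not by $\fra(u_n-u_m)$, and the Cauchy argument collapses. Your patch --- introduce $\frc(v)=\sum_{k,j}\int\chi^2a_{kj}(\partial_jv)(\partial_kv)$, observe $\mu\int\chi^2|\nabla v|^2\le\frc(v)\le\|\chi\|_\infty^2\fra_0(v)$, note that $\frc$ is closable, and invoke the maximality of the regular part (the largest closable form dominated by $\fra_0$, which is indeed Simon's characterisation) to get $\frc\le\|\chi\|_\infty^2\fra_{0,\mathrm{reg}}$ and hence, passing to closures along a form core, $\overline{\frc}(u)\le\|\chi\|_\infty^2\fra(u)\le\|\chi\|_\infty^2\|H^{1/2}u\|_2^2$ for $u\in D(\fra)$ --- does dispatch the obstacle. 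The one detail you should nail down is the closability of $\frc$: two-sided comparability with $v\mapsto\int\chi^2|\nabla v|^2$ does transfer closability, and that reference form is closable because if $u_n\to0$ in $L^2$ and $\chi\nabla u_n\to w$ in $L^2$, then $\nabla(\chi u_n)=\chi\nabla u_n+u_n\nabla\chi\to w$ while $\chi u_n\to0$, so $w=0$ by closedness of the distributional gradient (here $\chi\in C_{\rm b}^\infty$ is used). The paper avoids the auxiliary form and the maximality property altogether: it uses instead the relaxation characterisation of the closed regular part, namely that for $u\in D(\fra)$ there is a sequence $u_n\in C_c^\infty$ with $u_n\to u$ in $L^2$ and $\fra_0(u_n)\to\fra(u)$; then $\mu\int\chi^2|\nabla u_n|^2\le\|\chi\|_\infty^2\fra_0(u_n)$ gives a uniform $W^{1,2}$-bound on $\chi u_n$, a weakly convergent subsequence identifies the limit as $\chi u\in W^{1,2}(\RR^d)$, and weak lower semicontinuity of the $L^2$-norm yields the estimate. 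Your route buys a statement about monotonicity of regular parts under domination that is reusable; the paper's buys a shorter proof that needs no closability discussion. Both ultimately rest on facts from \cite{bSim5}/\cite{AE2}.
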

\begin{proof}  
Let $u \in D(\fra)$.
Then there exists a sequence $(u_n)_{n \in \NN}$ in $D(\fra_0) = C_c^\infty(\RR^d)$
such that $\lim u_n = u$ in $L^2(\RR^d)$ and 
$\fra(u) = \lim \fra_0(u_n)$.
By the ellipticity assumption on the support of $\chi$ one deduces
\begin{equation} \label{elemm1;1}
\mu \int_{\RR^d} \chi^2 \, |\nabla u_n|^2
\leq \sum_{k,j=1}^d \int_{\RR^d} a_{kj} \, (\partial_k u_n) \, (\partial_j u_n) \, \chi^2
\leq \|\chi\|_\infty^2 \, \fra_0(u_n)
\end{equation}
for all $n \in \NN$.
Therefore $(\chi u_n)_{n \in \NN}$ is bounded in $W^{1,2}(\RR^d)$.
Hence it has a weakly convergent subsequence in $W^{1,2}(\RR^d)$.
Since $\lim \chi u_n = \chi u$ in $L^2(\RR^d)$ it follows that 
$\chi u \in W^{1,2}(\RR^d)$.
Then taking the limit $n \to \infty$ in (\ref{elemm1;1}) one estimates
\[
\mu \int_{\RR^d} \chi^2 \, |\partial_k u |^2
\leq \|\chi\|_\infty^2 \, \fra(u)
\leq \|\chi\|_\infty^2 \, \|H^{1/2} u\|_2^2
\]
for all $k \in \{ 1,\ldots,d \} $.
\end{proof}

\begin{lemma}\label{lemm2}
Let $\chi, \widetilde{\chi} \in C_{\rm b}^\infty(\RR^d)$,
$\mu > 0$ and assume that $(a_{kj}(x)) \geq \mu I$ for almost every
$x \in \supp \chi \cup \supp \widetilde{\chi}$.
Then for all $\beta > 0$  small enough we have 
$$\int_{\RR^d} | (M_{\widetilde{\chi}} e^{-sH} M_\chi u)(y) |^2 \, e^{\beta \frac{|x-y|^2}{s}} \, dy 
\le C s^{-d/2} e^{2\beta t^2/s} e^{-s} \| u \|_1^2$$
for  all $t > 0$, $s > 0$, $x \in \RR^d$ and $u \in L^2(\RR^d)$ with $\supp u \subset B(x,t)$.
\end{lemma}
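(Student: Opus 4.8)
The plan is to combine the weighted $L^1$–$L^2$ (Gaussian-type) off-diagonal estimate for $M_{\widetilde\chi} e^{-sH}M_\chi$ with the elementary kernel bound already available. Recall from Theorem~\ref{thEO} together with the factorization argument that gave \eqref{1-2}–\eqref{1-22} that $M_{\widetilde\chi}e^{-tA}M_\chi$ (hence also $M_{\widetilde\chi}e^{-tH}M_\chi = e^{-t}M_{\widetilde\chi}e^{-tA}M_\chi$) has a kernel $p_t$ with a Gaussian upper bound $|p_t(x,y)|\le Ct^{-d/2}e^{-c|x-y|^2/t}(1+t)^{d/2}e^{-t}$. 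The first step would be to absorb the factor $(1+t)^{d/2}e^{-t}$ into a harmless constant, so that effectively $|p_s(x,y)|\le Cs^{-d/2}e^{-c|x-y|^2/s}e^{-s/2}$, say, with the remaining $e^{-s/2}$ kept to produce the $e^{-s}$ that appears on the right-hand side of the claim. (One keeps track of one copy of $e^{-s}$ since the kernel of $M_{\widetilde\chi}e^{-sH}M_\chi$ carries a factor $e^{-s}$ relative to the $A$-semigroup.)

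Next I would write, for $u$ supported in $B(x,t)$,
\[
(M_{\widetilde\chi}e^{-sH}M_\chi u)(y) = \int_{B(x,t)} p_s(y,z)\,u(z)\,dz,
\]
and estimate pointwise
\[
|(M_{\widetilde\chi}e^{-sH}M_\chi u)(y)|
\le C s^{-d/2} e^{-s} \int_{B(x,t)} e^{-c|y-z|^2/s}\,|u(z)|\,dz.
\]
Multiplying by $e^{\beta|x-y|^2/s}$ and using that for $z\in B(x,t)$ one has $|x-y|^2 \le 2|y-z|^2 + 2|x-z|^2 \le 2|y-z|^2 + 2t^2$, we obtain $e^{\beta|x-y|^2/s}\le e^{2\beta t^2/s} e^{2\beta|y-z|^2/s}$. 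Choosing $\beta>0$ small enough that $2\beta < c$, the Gaussian factor $e^{-c|y-z|^2/s}e^{2\beta|y-z|^2/s} = e^{-(c-2\beta)|y-z|^2/s}$ still decays. Hence
\[
|(M_{\widetilde\chi}e^{-sH}M_\chi u)(y)|^2 e^{\beta|x-y|^2/s}
\le C s^{-d} e^{-2s} e^{2\beta t^2/s}\Big(\int_{B(x,t)} e^{-(c-2\beta)|y-z|^2/(2s)}|u(z)|\,dz\Big)^2.
\]

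Finally I would integrate in $y$. By the Cauchy–Schwarz inequality applied to the inner integral in the measure $|u(z)|\,dz$ (total mass $\|u\|_1$), the square of the integral is at most $\|u\|_1 \int_{B(x,t)} e^{-(c-2\beta)|y-z|^2/s}|u(z)|\,dz$; integrating in $y$ over $\RR^d$ first and using $\int_{\RR^d} e^{-(c-2\beta)|y-z|^2/s}\,dy \le C s^{d/2}$ uniformly in $z$, Fubini yields $\int_{\RR^d}(\cdots)\,dy \le C s^{d/2}\|u\|_1^2$. Collecting the constants gives
\[
\int_{\RR^d} |(M_{\widetilde\chi}e^{-sH}M_\chi u)(y)|^2 e^{\beta|x-y|^2/s}\,dy
\le C s^{-d} e^{-2s} e^{2\beta t^2/s} \cdot s^{d/2}\|u\|_1^2
= C s^{-d/2} e^{2\beta t^2/s} e^{-2s}\|u\|_1^2,
\]
which is even stronger than the stated bound (with $e^{-2s}$ in place of $e^{-s}$); keeping only $e^{-s}$ gives exactly the claim. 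The only genuinely delicate point is choosing $\beta$ small relative to the Gaussian constant $c$ from Theorem~\ref{thEO}, and the bookkeeping of the $(1+s)^{d/2}e^{-s}$ prefactor — everything else is a routine Gaussian convolution estimate, and there is no real obstacle.
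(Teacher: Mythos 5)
Your proposal follows essentially the same route as the paper: express the kernel of $M_{\widetilde\chi}e^{-sH}M_\chi$ as $e^{-s}p_s$, invoke the Gaussian bound of Theorem~\ref{thEO}, distribute the Gaussian weight $e^{\beta|x-y|^2/s}$ across the inner integral using $|x-y|^2\le 2|y-z|^2+2|x-z|^2$, and finish with Cauchy--Schwarz and a Gaussian convolution estimate.

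One bookkeeping slip is worth flagging. You correctly note in the prose that $(1+s)^{d/2}e^{-s}\le C e^{-s/2}$, so the kernel of $M_{\widetilde\chi}e^{-sH}M_\chi$ is bounded by $Cs^{-d/2}e^{-c|y-z|^2/s}e^{-s/2}$. But in the displayed pointwise estimate you then write the prefactor as $e^{-s}$, which would require $(1+s)^{d/2}$ to be bounded. After squaring, the correct factor is $(e^{-s/2})^2=e^{-s}$, not $e^{-2s}$ as you claim at the end. So the argument, carried out carefully, yields exactly the $e^{-s}$ in the statement rather than a stronger bound. This does not affect the validity of the lemma, but the claim that you obtain $e^{-2s}$ (and hence something ``even stronger'') should be retracted.
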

\begin{proof} 
By Theorem \ref{thEO} one estimates
\begin{eqnarray*}
\lefteqn{| (M_{\widetilde{\chi}} e^{-sH} M_\chi u)(y) |^2 \, e^{\beta \frac{|x-y|^2}{s}}
} \hspace{30mm} \\*
&=& | \int_{B(x,t)} e^{-s} p_s(y,z) \, u(z) \, dz |^2 \, e^{\beta \frac{|x-y|^2}{s}}\\
&\le& C e^{-s}  \Big(  \int_{B(x,t)} s^{-d/2} \, e^{-c \frac{|y-z|^2}{s}} \, 
    e^{\beta \frac{|x-y|^2}{2s}} \, |u(z)| \, dz \Big)^2\\
&\le&  C e^{-s}  \Big(  \int_{\RR^d} s^{-d/2} \, e^{-(c-\beta) \, \frac{|y-z|^2}{s}} 
    \,  |u(z)| \, dz \Big)^2 e^{2 \beta t^2/s}  \\
&\le&  C e^{-s} s^{-d/2} \, \|u\|_1  \int_{\RR^d} s^{-d/2} \, e^{-(c-\beta) \, \frac{|y-z|^2}{s}} 
    \,  |u(z)| \, dz \, e^{2 \beta t^2/s}. 
\end{eqnarray*}
Taking $\beta < \frac{1}{2} c$ and integrating over $y$ yields the lemma. 
\end{proof}

Since $e^{-sH}L^2(\RR^d) \subset D(\fra)$ for all $s > 0$ we obtain from Lemma \ref{lemm1} 
the inclusion 
$M_\chi \nabla e^{-sH}M_\chi( L^2(\RR^d)) \subset W^{1,2}(\RR^d)$ for all $s > 0$.  
The following weighted $L^2$-estimate  is in the same spirit as weighted gradient estimates for heat kernels (see 
\cite{CoD}, \cite{Gri2}  and Theorem 6.19 in \cite{Ouh5}).

\begin{lemma}\label{lemm3} 
For all $\beta > 0$ small enough we have 
$$ \int_{\RR^d} | (M_\chi \nabla e^{-sH} M_\chi u)(y) |^2 \, e^{\beta \frac{| x-y|^2}{s}} \, dy  
\le C s^{-d/2 -1} \, e^{6 \beta t^2/s} \, \| u \|_1^2$$
for  all $t > 0$, $s > 0$, $x \in \RR^d$ and $u \in L^2(\RR^d)$ with $\supp u \subset  B(x,t)$.
\end{lemma}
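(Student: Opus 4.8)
The plan is to reduce the weighted gradient bound to the weighted $L^2$ bound for the semigroup itself, Lemma~\ref{lemm2}, together with the a priori gradient estimate of Lemma~\ref{lemm1}. The key device is the standard trick of splitting the time parameter and inserting a spatial weight: write $e^{-sH} = e^{-(s/2)H} e^{-(s/2)H}$, so that $M_\chi \nabla e^{-sH} M_\chi u = M_\chi \nabla e^{-(s/2)H} \big( e^{-(s/2)H} M_\chi u \big)$, and apply Lemma~\ref{lemm1} with $H$ replaced in the obvious way so that $\|M_\chi \nabla v\|_2 \lesssim \|H^{1/2} v\|_2$ for $v = e^{-(s/2)H} M_\chi u$. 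The point is that $\|H^{1/2} e^{-(s/2)H}\|_{2\to 2} \le C s^{-1/2}$ by the spectral theorem, so the gradient costs a factor $s^{-1/2}$, which is exactly the extra $s^{-1}$ under the square root in the statement compared with Lemma~\ref{lemm2}. The weight $e^{\beta |x-y|^2/s}$ is produced from Lemma~\ref{lemm2} applied to the inner operator, and one must be careful that conjugating $\nabla e^{-(s/2)H}$ by the exponential weight only changes constants, which is where a Davies-type perturbation argument enters.

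Concretely, I would proceed as follows. First, let $\psi$ be a smooth bounded function on $\RR^d$ with $|\nabla \psi| \le 1$, set $U_\rho = M_{e^{\rho\psi}}$ and consider the conjugated operator $U_\rho M_\chi \nabla e^{-(s/2)H} U_{-\rho}$. Using Lemma~\ref{lemm1} applied to the twisted form (exactly as in the proof of Lemma~\ref{le2}, via \cite{EO1} Proposition~3.6) one obtains, for $\rho$ in a suitable range, an estimate of the form $\|U_\rho M_\chi \nabla e^{-(s/2)H} U_{-\rho} w\|_2 \le C s^{-1/2} e^{\omega \rho^2 s} \|w\|_2$, the extra $s^{-1/2}$ coming from $\|H^{1/2} e^{-(s/2)H}\|_{2\to 2}$ and the $e^{\omega\rho^2 s}$ being the price of the exponential conjugation. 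Second, I would take $w = U_\rho e^{-(s/2)H} M_\chi u$ and bound $\|U_\rho e^{-(s/2)H} M_\chi u\|_2$ by Lemma~\ref{lemm2}: choosing $\psi(y) = \min(|x-y|, N)$ for a large truncation $N$ and letting $N \to \infty$, the weighted $L^2$-bound of Lemma~\ref{lemm2} gives $\|U_\rho e^{-(s/2)H} M_\chi u\|_2^2 \le C s^{-d/2} e^{2\rho^2 s} e^{-s/2} e^{2\rho t^2 \cdot(\ldots)} \|u\|_1^2$ after identifying $\beta$ with a multiple of $\rho^2 s$. Third, combining the two displays and choosing $\rho$ proportional to $\beta^{1/2} s^{-1/2}$ (so that $\rho^2 s \sim \beta$ and $\rho t^2 \sim \beta t^2/s$ up to constants), and finally multiplying through by $U_{-\rho}$ on the outside (which recovers the weight $e^{\beta|x-y|^2/s}$ on the support of the weight $e^{\rho\psi}$ and costs only a bounded factor where $\psi$ is bounded), yields the claimed bound $\int |(M_\chi \nabla e^{-sH} M_\chi u)(y)|^2 e^{\beta|x-y|^2/s}\,dy \le C s^{-d/2-1} e^{6\beta t^2/s}\|u\|_1^2$, after absorbing the harmless $e^{-s/2}$ and adjusting the numerical constant in front of $\beta t^2/s$.

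The main obstacle, and the step deserving the most care, is the twisted gradient estimate $\|U_\rho M_\chi \nabla e^{-(s/2)H} U_{-\rho}\|_{2\to 2} \le C s^{-1/2} e^{\omega\rho^2 s}$: one cannot simply quote Lemma~\ref{lemm1} because that lemma bounds $\|M_\chi \nabla v\|_2$ by $\|H^{1/2} v\|_2$ for the \emph{untwisted} $H$, whereas here the conjugation by $U_\rho$ modifies both the form and the semigroup. The right way is to go back to the form: $U_\rho H U_{-\rho}$ is the operator associated with the perturbed sesquilinear form $\fra_\rho(u,v) = \sum a_{kj}\int (\partial_j + \rho(\partial_j\psi))u \, \overline{(\partial_k - \rho(\partial_k\psi))v} + \langle u,v\rangle$, and ellipticity of $(a_{kj})$ on $\supp\chi$ still gives $\mu\|\chi\nabla(e^{\rho\psi}v)\|_2^2 \lesssim \RRe\fra_\rho(e^{\rho\psi}v) + C\rho^2\|v\|_2^2$; combined with $\|\fra_\rho^{1/2} e^{-(s/2)H}\|\lesssim s^{-1/2}e^{\omega\rho^2 s}$ one gets the claim. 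Once this twisted estimate is in hand, the rest is bookkeeping with exponents and the choice of $\rho$, entirely parallel to the Davies--Gaffney computation already carried out in the proof of Lemma~\ref{le2}. One should also double-check that $M_\chi\nabla e^{-sH}M_\chi u$ genuinely lies in $L^2$ so that the integral makes sense, but this is precisely the content of Lemma~\ref{lemm1} applied to $e^{-sH}M_\chi u \in D(\fra)$, as already remarked before the statement.
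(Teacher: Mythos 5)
Your overall strategy --- split $e^{-sH}=e^{-(s/2)H}e^{-(s/2)H}$, move the weight through a Davies conjugation $U_\rho(\cdot)U_{-\rho}$, and pay $s^{-1/2}$ for the gradient via $\|H^{1/2}e^{-(s/2)H}\|_{2\to 2}\le C s^{-1/2}$ --- is genuinely different from the paper's argument, which keeps the Gaussian weight in place from the start, bounds $\int \chi^2|\nabla f|^2 e^{\beta|x-\cdot|^2/s}\psi_n$ by the quadratic form via local ellipticity, integrates by parts, and absorbs the terms containing $\sqrt{I_n}$, controlling the rest with Lemma~\ref{lemm2} and $\|He^{-sH}\|_{2\to 2}\le s^{-1}$. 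Your skeleton could be made to work, and you correctly isolate the twisted gradient estimate as the crux, but as written there are two genuine gaps.

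First, the form manipulations you propose (identifying $U_\rho H U_{-\rho}$ with the perturbed form $\fra_\rho$, expanding $\fra(e^{\rho\psi}v)$ by the Leibniz rule and absorbing cross terms) are justified only when $\fra$ is actually given by the integral expression $\sum\int a_{kj}(\partial_j u)(\partial_k v)$ on its domain. For the degenerate operator, $\fra$ is the closure of the \emph{regular part} of $\fra_0$, which need not be represented by that expression, and it is not even clear that $e^{\rho\psi}v\in D(\fra)$ for $v\in D(\fra)$. This is precisely why the paper first proves the estimate for the uniformly elliptic approximations $a^{(n)}_{kj}=a_{kj}+\frac{1}{n}\delta_{kj}$ (where $D(\fra)=W^{1,2}(\RR^d)$), with constants independent of $n$, and then passes to the limit using the strong convergence $e^{-tH_n}\to e^{-tH}$ from \cite{AE2}; Lemma~\ref{lemm1}, which only yields a one-sided inequality by approximating from $C_c^\infty(\RR^d)$, does not license the identities you use. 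Your proof needs the same approximation step. Second, the final weight conversion fails as stated: with a single $\rho\sim\sqrt{\beta/s}$ and $\psi=\min(|x-\cdot|,N)$ the conjugation produces the weight $e^{2\rho\psi(y)}\sim e^{c\sqrt{\beta/s}\,|x-y|}$, which is \emph{smaller} than the required Gaussian weight $e^{\beta|x-y|^2/s}$ once $|x-y|\gg\sqrt{s/\beta}$, so the claimed integral bound does not follow (note also $\rho t^2\sim \sqrt{\beta}\,t^2/\sqrt{s}$, not $\beta t^2/s$). To pass from linear-exponential twists to a Gaussian weight you must decompose $\RR^d$ into the annuli $C_j(x,t)$ and optimize $\rho=\rho_j\sim 2^j t/s$ on each, exactly as in the derivation of (\ref{2.9}) in the proof of Lemma~\ref{le2}, and then verify summability for $\beta$ small. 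Both gaps are fixable, but they are exactly where the real work lies.
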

\begin{proof} 
In order to avoid domain problems of forms we shall proceed by approximation.
First, we prove the lemma for uniformly elliptic 
coefficients with constants $\beta$ and $C$ depending only on 
$\mu > 0 $  such that  $(a_{kj}(x)) \ge \mu \, I$ a.e.\  $x \in \supp \chi$. 

Assume that there exists a $\mu_0 > 0$ such that $(a_{kj}(x)) \ge \mu_0 \, I$ for 
a.e.\ $x \in \RR^d$. 
In this case the form $\fra$ has domain $W^{1,2}(\RR^d)$.
We use similar ideas as in the  proof of Theorem 6.19 in \cite{Ouh5} 
but we want to prove that the constants  in the estimates are independent of $\mu_0$. 
Let $\psi \in C_c^\infty(\Ri^d)$ be such that $\psi(x) = 1$ for all $x \in B(0,1)$
and $0 \leq \psi \leq \one$.
For all $n \in \Ni$ define $\psi_n \in C_c^\infty(\Ri^d)$ by 
$\psi_n(x) = \psi(\frac{1}{n} \, x)$.
Set  
$$ I_n :=  \int_{\RR^d} | \chi(y) \, (\nabla   e^{-sH} M_\chi u)(y) |^2 
   \, e^{\beta \frac{| x-y|^2}{s}} \,  \psi_n(y) \, dy$$
 and  define $ f  :=  e^{-sH} M_\chi u$. 
Then, 
\begin{eqnarray*}
I_n 
&\le& \frac{1}{\mu} \sum_{k,j} \int_{\RR^d} a_{kj}(y) \, (\partial_k f)(y) \, 
    (\partial_j f)(y) \, e^{\beta \frac{| x-y|^2}{s}} \, \chi(y)^2 \,  \psi_n(y) \, dy\\
&=& \frac{1}{\mu} \sum_{k,j} \int_{\RR^d} a_{kj} \, (\partial_k f) \, 
    \partial_j \Big( f \, e^{\beta \frac{| x- \cdot|^2}{s}} \, \chi^2 \, \psi_n \Big) \\*
&& \hspace{3mm} {}+ \frac{1}{\mu} \sum_{k,j} \int_{\RR^d} a_{kj}(y) \, (\partial_k f)(y) \,
     f(y) \, \frac{2 \beta (x_j-y_j)}{s} \, e^{\beta \frac{| x-y|^2}{s}} \, \chi(y)^2 \,
       \psi_n(y) \, dy\\*
&& \hspace{3mm} {}- \frac{2}{\mu} \sum_{k,j} \int_{\RR^d} a_{kj}(y) \, (\partial_k f)(y) \,
     f(y) \, e^{\beta \frac{| x-y|^2}{s}} \, (\partial_j \chi)(y) \, \chi(y) \, \psi_n(y) \, dy\\*
&& \hspace{3mm} {}- \frac{1}{n \mu} \sum_{k,j} \int_{\RR^d} a_{kj}(y) \, (\partial_k f)(y) \, f(y) \,
     e^{\beta \frac{| x-y|^2}{s}} \, \chi(y)^2  \, (\partial_j\psi)(\tfrac{1}{n} \, y) \, dy\\
& =: & J_{1,n} + J_{2,n} + J_{3,n} + J_{4,n}.
\end{eqnarray*}
Since $y \mapsto f (y) e^{\beta \frac{| x-y|^2}{s}} \chi(y)^2 \psi_n(y)$ 
is an element of $W^{1,2}(\RR^d)$ we have 
\begin{eqnarray*}
J_{1,n} 
&=& \frac{1}{\mu} \, \fra(  f,  f e^{\beta \frac{|x - \cdot|^2}{s}}  \chi^2 \psi_n )\\
&=& \frac{1}{\mu} \, \int_{\RR^d} (A e^{-sH} M_\chi u) \, (e^{-sH} M_\chi u) \, 
      e^{\beta \frac{| x- \cdot|^2}{s}} \, \chi^2 \, \psi_n \\
&\le &  \frac{\| \chi \|_\infty}{\mu} \,
    \| H e^{-sH} M_\chi u \|_2 \, 
     \| e^{\beta \frac{|x - \cdot|^2}{s}} \, M_\chi e^{-sH} M_\chi u \|_2 .
\end{eqnarray*}
The standard estimate $\| H e^{-sH} \|_{2 \to 2} \le s^{-1} $ and Lemma \ref{lemm2}  give 
\begin{equation}\label{J1}
J_{1,n}  \le C  s^{-d/2 -1} e^{2\beta t^2/s} \| u \|_1^2
\end{equation} 
if $\beta$ is small enough.
Using the obvious inequality 
$\frac{|x_j-y_j|}{s}  \le \frac{1}{\sqrt{\varepsilon \, s}} \, e^{\eps \frac{| x - y |^2}{s}} $ 
 we have 
\begin{eqnarray*}
|J_{2,n}|  
&\le&  \frac{C}{\sqrt{s}} \sum_k \int_{\RR^d}  
   |\partial_k e^{-sH} M_\chi u | \, \chi^2 \, |e^{-sH} M_\chi u| \,
     e^{2\beta \frac{| x-\cdot|^2}{s}}\\
&\le& \frac{C}{\sqrt{s}}  \sqrt{I_n}  
   \Big( \int_{\RR^d} | (M_\chi e^{-sH} M_\chi u)(y) |^2 e^{3\beta \frac{| x-y|^2}{s}} \, dy \Big)^{1/2}.
\end{eqnarray*}
Therefore Lemma \ref{lemm2} implies
\begin{equation}\label{J2}
|J_{2,n}|   \le C \sqrt{I_n} s^{-\frac{d}{4} - \frac{1}{2}} e^{-s/2} e^{3\beta t^2/s} \| u \|_1.
\end{equation} 
We estimate the third term in a similar way. 
\begin{eqnarray}
| J_{3,n} | 
&\le& C  \sum_k \int_{\RR^d} 
 |\partial_k e^{-sH} M_\chi u | \, |\chi \, \partial_j \chi | \,
     |e^{-sH} M_\chi u| \, e^{\beta \frac{| x- \cdot|^2}{s}} \, \psi_n \nonumber  \\
&\le& C \, \sqrt{I_n} 
   \Big( \int_{\RR^d} | M_{\partial_j\chi} e^{-sH} M_\chi u |^2 \, 
         e^{\beta \frac{| x- \cdot|^2}{s}} \Big)^{1/2} \nonumber  \\
&\le& C \sqrt{I_n} s^{-\frac{d}{4} - \frac{1}{2}} \,  e^{-s/3} \, e^{\beta t^2/s} \, \| u \|_1.
\label{J3}
\end{eqnarray}
Finally,
\begin{eqnarray}
| J_{4,n} | 
&\le& \frac{C}{n}   \sum_{k,j}  \int_{\RR^d} 
 |  (\chi \partial_k  e^{-sH} M_\chi u)(y) | \, | (M_\chi e^{-sH} M_\chi u)(y) | \,
     e^{\beta \frac{| x-y|^2}{s}} \nonumber  \times \\*
 && \hspace{75mm} {} \times | (\partial_j \psi)(\tfrac{1}{n} \, y)| \, dy  \nonumber   \\
&\le& \frac{C}{n}  \| M_\chi \, \nabla \, e^{-sH} \, M_\chi u \|_2 
    \Big( \int_{\RR^d}   | (M_\chi e^{-sH} M_\chi u)(y) |^2 \, 
      e^{2\beta \frac{| x-y|^2}{s}} \, dy \Big)^{1/2}  \nonumber   \\
&\le& \frac{C}{n}  \| M_\chi \, \nabla \, e^{-sH} \, M_\chi u \|_2 \,
    s^{-d/4} e^{-s/2} e^{2\beta t^2/s} \| u \|_1.
\label{J4}
\end{eqnarray}
Therefore, we obtain from (\ref{J1}), (\ref{J2}), (\ref{J3}) and (\ref{J4}) that
$$ I_n  \le C s^{-d/2-1} e^{6\beta t^2/s} \| u \|_1^2 
   +  \frac{C}{n} \, \| M_\chi \, \nabla \, e^{-sH} \, M_\chi u \|_2 \, 
    s^{-d/4}  e^{2\beta t^2/s} \| u \|_1.$$
Letting $n \to \infty$ and then  use  Fatou's lemma yields 
\begin{equation}\label{Ifinal}
 \int_{\RR^d} | (M_\chi \nabla e^{-sH} M_\chi u)(y) |^2 \, e^{\beta \frac{| x-y|^2}{s}} \, dy  
\le C s^{-d/2 -1} \, e^{6 \beta t^2/s} \| u \|_1^2.
\end{equation}
The constants $C$ and $\beta$ are independent of $\mu_0$. 

Now we prove the lemma for degenerate operators. 
For all $n \in \Ni$ set $a_{kj}^{(n)} = a_{kj} + \frac{1}{n} \delta_{kj}$.
Then $(a_{kj}^{(n)}(x)) \ge \frac{1}{n} \, I$ for a.e.\  $x \in \RR^d$
and $(a_{kj}^{(n)}(x)) \ge  \mu \, I$ for a.e.\ $x \in \supp \chi$.
Moreover, $\| a_{kj}^{(n)} \|_\infty \le 1 + \| a_{kj}  \|_\infty$.
 We denote by $A_n$ 
the elliptic operator with the coefficients $a_{kj}^{(n)}$ and $H_n = I + A_n$.
 We apply (\ref{Ifinal}) to $H_n$  and obtain 
\begin{equation}\label{final-n}
 \int_{\RR^d} | (M_\chi \nabla e^{-sH_n} M_\chi u)(y) |^2 \, 
    e^{\beta \frac{| x-y|^2}{s}} dy  
\le C s^{-d/2 -1} e^{6 \beta t^2/s} \| u \|_1^2. 
\end{equation}
for some constants $C$ and $\beta > 0$ which are  independent of $n$.  
Let $k \in \{ 1,\ldots,d \} $.
Then
$$| (e^{-sH_n} M_\chi u, \partial_k M_\chi (\varphi \, e^{\beta \frac{| x- . |^2}{2s}}) ) | 
\le C s^{-\frac{d}{4} - \frac{1}{2}} e^{3 \beta t^2/s} \| u \|_1 \| \varphi \|_2$$
for all $\varphi \in C_c^\infty(\Ri^d)$.
On the other hand,  $e^{-tH_n}$ converges strongly in $L^2(\RR^d)$  to $e^{-tH}$
(see \cite{AE2} Corollary~3.9).
It follows then that 
$$| (e^{-sH} M_\chi u, \partial_k M_\chi (\varphi \, e^{\beta \frac{| x- . |^2}{2s}}) ) |
 \le C s^{-\frac{d}{4} - \frac{1}{2}} e^{3 \beta t^2/s} \| u \|_1 \| \varphi \|_2.$$ 
Since this is true for  all $\varphi \in C_c^\infty(\RR^d)$ we have by density 
$$ \| (M_\chi \partial_k  e^{-sH} M_\chi u) \cdot e^{\beta \frac{| x-.|^2}{2s}}  \|_2 
\le C s^{-\frac{d}{4} - \frac{1}{2}} e^{3 \beta t^2/s} \| u \|_1.$$
This proves the lemma.
\end{proof} 

\begin{proof}[Proof of Theorem \ref{th7}] 
It follows from Lemma~\ref{lemm1} that the Riesz transform 
$M_\chi \partial_k A^{-1/2}$ is bounded on $L^2(\RR^d)$. 

In order to prove a weak type estimate for $T = M_\chi  \partial_k H^{-1/2} M_\chi$ 
we apply Theorem \ref{th1} with $S = M_\chi \partial_k H^{-1/2}$ and $A_t = e^{-t^2 H} M_\chi$. 
These operators are bounded on $L^2(\RR^d)$ and by Lemma \ref{le2}
the operators $A_t$ satisfy assumption \eqref{au2}. 
It remains then to check (\ref{DMop}).
 By the formula 
$$H^{-1/2} = \frac{1}{2 \sqrt{\pi}} \int_0^\infty e^{-s H} \frac{ds}{\sqrt{s}}$$
we have 
$$ H^{-1/2}e^{-t^2H} = \frac{1}{2 \sqrt{\pi}} \int_0^\infty e^{-(s+t^2)H} \frac{ds}{\sqrt{s}}= \frac{1}{2 \sqrt{\pi}} \int_0^\infty e^{-sH} \one_{\{s> t^2\}} \frac{ds}{\sqrt{s-t^2}}.$$
Let $\beta > 0$ be as in Lemma~\ref{lemm3} and let $\delta > 0$.
Fix $x \in \RR^d$, $t > 0$ and let $u \in L^2(\RR^d)$ with $\supp u \subset B(x,t)$. Set $\nu(s,t) = \Big| \one_{\{s> t^2\}} \frac{1}{\sqrt{s-t^2}} - \frac{1}{\sqrt{s}} \Big|$.
Then
\begin{eqnarray*}
\lefteqn{
2 \sqrt{\pi} \int_{\RR^d \setminus B(x,(1+ \delta)t)} | ((T - SA_t)u)(y) | \, dy
} \hspace{5mm} \\*
 &\le&    \int_0^\infty \int_{\RR^d \setminus B(x,(1+ \delta)t)} 
 | (M_\chi \partial_k e^{-s H} M_\chi u) (y) | \,  dy \, 
\nu(s,t) \, ds\\
 &\le&  \int_0^\infty\nu(s,t)   \left( \int_{\RR^d} | (M_\chi \partial_k e^{-s H} M_\chi u) (y) |^2 \, 
    e^{\beta \frac{| x-y|^2}{s}} dy \right)^{1/2} \times \\*
 && \hspace{50mm} {} \times  \Big( \int_{\RR^d \setminus B(x,(1+ \delta)t)} 
      e^{-\beta \frac{| x-y|^2}{s}} \, dy\Big)^{1/2} ds\\
 &\le& C \int_0^\infty \nu(s,t) \,
       s^{-\frac{d}{4} - \frac{1}{2}} e^{3 \beta t^2/s} \|u\|_1 
 \Big( \int_{\RR^d \setminus B(x,(1+ \delta)t)} e^{-\beta \frac{| x-y|^2}{s}}  dy \Big)^{1/2} ds.
 \end{eqnarray*}
Note that we have used Lemma \ref{lemm3}  in the last inequality.
Now 
$$  \int_{\RR^d \setminus B(x,(1+ \delta)t)} e^{-\beta \frac{| x-y|^2}{s}} \, dy
\le  e^{-\beta \frac{(1+\delta)^2 t^2}{2s}} 
    \int_{\RR^d } e^{-\beta \frac{| x-y|^2}{2s}} \, dy
\le C s^{d/2} e^{-\beta \frac{(1+\delta)^2 t^2}{2s}}.$$
Choosing $\delta \ge 4$ we obtain a positive constant $\gamma$ such that 
\[
 \int_{\RR^d \setminus B(x,(1+ \delta)t)} | (T - SA_t)u(y) | \, dy 
\leq C \int_0^\infty \nu(s,t)
   s^{-1/2} e^{-\gamma t^2/s} \, ds.
\]
The latter integral is bounded by some constant  $M$  independent of $t$.
 This proves (\ref{DMop}) and hence $T = M_\chi  \partial_k H^{-1/2} M_\chi$ is weak type
$(1,1)$.
By interpolation, it is bounded on $L^p(\RR^d)$ for all $1 < p \le 2$.
\end{proof}

As discussed at the end of the previous section, we note that if one proves a 
version of  Theorem \ref{thEO} without the extra factor $(1+t)^{d/2}$ if
 $(a_{kj}(x)) \ge \mu I$ for a.e.\ $x$ in a  `big' domain, then
Theorem \ref{th7} holds with $A$ in place of $H$.
That is $M_\chi  \partial_k A^{-1/2} M_\chi$ is weak type $(1,1)$ and bounded on $L^p(\RR^d)$ 
for all $1 < p \le 2$.
In \cite{EO2}, we prove by a different method that 
if the coefficients $a_{kj} \in W^{1,\infty}(\Ri^d)$, 
then $M_\chi \partial_k (I+A)^{-1/2} $ and $M_\chi \partial_k \, \partial_j (I + A)^{-1}$ 
are bounded on $L^p(\RR^d)$ for all $p \in (1,\infty)$.
Moreover, if $a_{kj} \in W^{\nu,\infty}(\Ri^d,\CC)$ then we show that 
$M_\chi \, \partial_k (I + A)^{-1/2} \, M_\chi$ is bounded on $L^p$
for all $p \in (1,\infty)$.

\vspace{.3cm}
\noindent {\bf Acknowledgements} Most of this work was carried out whilst  the second named
 author was visiting the  Department of Mathematics at the University of  Auckland. 
He wishes to thank both the University of Auckland and  the CNRS for financial support. 
 Part of this work is supported by the Marsden Fund Council from Government funding, 
administered by the Royal Society of New Zealand.

\newcommand{\etalchar}[1]{$^{#1}$}

\end{document}